\newtheorem{thm}{Theorem}[section]
\newtheorem{prop}{Proposition}[section]
\newtheorem{conj}{Conjecture}[section]
\newtheorem{alphthm}{Theorem}[section]
\newcommand{\lct}{\; \raisebox{-.96ex}{$\stackrel{\textstyle <}{\sim}$} \;}
\newcommand{\gct}{\; \raisebox{-.96ex}{$\stackrel{\textstyle >}{\sim}$} \;}
\newcommand{\mbb}{\mathbb}
\begin{document}

\title[A family of fractal Fourier restriction estimates]{A family of 
       fractal Fourier restriction estimates with implications on the Kakeya 
	   problem}

\author{Bassam Shayya}
\address{Department of Mathematics\\
         American University of Beirut\\
         Beirut\\
         Lebanon}
\email{bshayya@aub.edu.lb}

\date{June 26, 2022}

\subjclass[2020]{42B10, 42B20; 28A75.}

\begin{abstract}
In a recent paper \cite{dz:schrodinger3}, Du and Zhang proved a fractal 
Fourier restriction estimate and used it to establish the sharp $L^2$ 
estimate on the Schr\"{o}dinger maximal function in $\mbb R^n$, $n \geq 2$. 
In this paper, we show that the Du-Zhang estimate is the endpoint of a 
family of fractal restriction estimates such that each member of the family 
(other than the original) implies a sharp Kakeya result in $\mbb R^n$ that 
is closely related to the polynomial Wolff axioms. We also prove that all 
the estimates of our family are true in $\mbb R^2$.
\end{abstract}

\maketitle

\section{Introduction}

Let $Ef=E_{\mathcal P} f$ be the extension operator associated with the unit 
paraboloid ${\mathcal P} = \{ \xi \in \mbb R^n : \xi_n = \xi_1^2 + \ldots + 
\xi_{n-1}^2 \leq 1 \}$ in $\mbb R^n$: 
\begin{displaymath}
Ef(x) = \int_{\mbb B^{n-1}} e^{-2 \pi i x \cdot (\omega, |\omega|^2)} 
        f(\omega) d\omega,	
\end{displaymath}
where $\mbb B^{n-1}$ is the unit ball in $\mbb R^{n-1}$.

Our starting point is the following fractal restriction theorem of Du and 
Zhang \cite{dz:schrodinger3}. (Throughout this paper, we denote a cube in 
$\mbb R^n$ of center $x$ and side-length $r$ by $\widetilde{B}(x,r)$.)

\begin{alphthm}[Du and Zhang {\cite[Corollary 1.6]{dz:schrodinger3}}]
\label{highfracdim}
Suppose $n \geq 2$, $1 \leq \alpha \leq n$, $R \geq 1$, 
$X= \cup_k \widetilde{B}_k$ is a union of lattice unit cubes in 
$\widetilde{B}(0,R) \subset \mbb R^n$, and
\begin{displaymath}
\gamma = \sup \frac{\#\{ \widetilde{B}_k : \widetilde{B}_k \subset 
\widetilde{B}(x',r) \}}{r^\alpha},
\end{displaymath}
where the sup is taken over all pairs 
$(x',r) \in \mbb R^n \times [1,\infty)$ satisfying 
$\widetilde{B}(x',r) \subset \widetilde{B}(0,R)$. Then to every 
$\epsilon > 0$ there is a constant $C_\epsilon$ such that
\begin{equation}
\label{latticez2}
\int_X |Ef(x)|^2 dx \leq C_\epsilon R^\epsilon \, \gamma^{2/n} 
                         R^{\alpha/n} \| f \|_{L^2(\mbb B^{n-1})}^2
\end{equation}
for all $f \in L^2(\mbb B^{n-1})$.
\end{alphthm}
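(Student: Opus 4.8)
The plan is to prove this by the polynomial partitioning method, following the framework Guth introduced for the restriction conjecture and that Du and Zhang adapt to fractal weights. First I would reduce the claim to a weighted $L^2$ extension estimate: setting $\mu = \sum_k \mathbf 1_{\widetilde B_k}$, so that the left-hand side of \eqref{latticez2} is $\int |Ef|^2 \, d\mu$, and normalizing $\| f \|_{L^2(\mbb B^{n-1})} = 1$, the statement becomes $\int |Ef|^2 \, d\mu \lct R^\epsilon \gamma^{2/n} R^{\alpha/n}$ whenever $\mu$ is a sum of lattice unit cubes inside $\widetilde B(0,R)$ with $\alpha$-dimensional constant $\gamma$. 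I would then argue by induction, with an outer induction on the scale $R$ and an inner (auxiliary) induction on the number of wave packets that make up $Ef$, exactly as in Guth's argument. The base case $\alpha = n$ is immediate: since $X \subset \widetilde B(0,R)$ one automatically has $\gamma \lct 1$, and the target reduces to the classical local $L^2$ extension bound $\int_{\widetilde B(0,R)} |Ef|^2 \lct R \| f \|_{L^2}^2$, which follows from Plancherel and the almost-orthogonality of the $R^{-1/2}$-caps; this same elementary inequality also supplies the terminal estimates in the recursion.

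Next I would run the broad--narrow dichotomy of Bourgain--Guth, in the $k$-broad form of Guth. Fix a large parameter $K$, decompose $\mbb B^{n-1}$ into $K^{-1}$-caps $\tau$, and at each point $x \in \widetilde B(0,R)$ split the contribution to $|Ef(x)|$ into a \emph{narrow} part, dominated by an $O_K(1)$-dimensional family of caps, and a \emph{broad} ($k$-broad) part in which several mutually transverse caps carry comparable shares. The narrow part I would handle by a parabolic rescaling of each cap $\tau$ to the unit scale: this converts the problem on $\widetilde B(0,R)$ into one at scale $R/K^2$ and turns $\mu$ into a new union of lattice cubes whose $\alpha$-dimensional constant has grown by at most a power of $K$, so that summing over the $\sim K^{n-1}$ caps closes the outer induction once the powers of $K$ are absorbed into $R^\epsilon$ (take $K$ a small power of $R$). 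All the real difficulty is in the broad part.

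For the broad part I would invoke Guth's polynomial partitioning, applied to the measure $|Ef|^2 \, d\mu$: choose a nonzero polynomial $P$ of degree $\sim D$ so that $\widetilde B(0,R) \setminus Z(P)$ is a union of $\sim D^n$ cells $O_i$ on which the masses $\int_{O_i} |Ef|^2 \, d\mu$ are essentially equal, and treat the ``wall'' $W$ (the $R^{1/2}$-neighbourhood of $Z(P)$) separately. The crucial geometric point is that $Ef$ is a sum of wave packets supported on tubes of dimensions $R^{1/2} \times \cdots \times R^{1/2} \times R$, and because a line meets $Z(P)$ in at most $D$ points, each such tube is relevant to only $\lct D$ of the $D^n$ cells; hence a typical $O_i$ sees a $\lct D^{1-n}$ fraction of the wave packets, which is precisely the reduction the inner induction consumes. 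Since restricting $\mu$ to $O_i$ does not increase its $\alpha$-dimensional constant, I would iterate the partitioning --- each pass trimming the number of relevant wave packets by a factor $\sim D^{n-1}$ --- and, when the recursion terminates, close with the Bourgain--Demeter $\ell^2$-decoupling theorem at scale $R^{1/2}$ together with the elementary local $L^2$ bound on a single $R^{1/2}$-cube. Weighing the per-cell gain $D^{1-n}$ against the number $D^n$ of cells across the recursion is what forces the exponent to be exactly $R^{\alpha/n}$; tracking the $\alpha$-dimensional constant carefully at \emph{every} stage, rather than merely rescaling the measure at the outset, is what upgrades the naive $\gamma$ to the sharp $\gamma^{2/n}$.

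The part I expect to fight hardest with is the wall $W$. One knows that $W \cap \widetilde B(0,R)$ is covered by only $\lct D R^{(n-1)/2}$ many $R^{1/2}$-cubes meeting $Z(P)$, so the portion of $\mu$ near $Z(P)$ is controlled by the $\alpha$-dimensionality; but turning this into an estimate requires a transverse-equidistribution analysis of the wave packets tangent to $Z(P)$ and a lower-dimensional instance of the whole theorem, and this is the technically heaviest and most delicate component. The two further points that need care are that the cells $O_i$ still have radius $R$ --- so the partitioning step cannot simply be followed by a rescaling, and one is committed to the subtle two-parameter (scale / number of wave packets) induction of Guth--Du--Zhang --- and that extracting $\gamma^{2/n}$ instead of $\gamma$ genuinely uses the fractal structure of $X$ at all intermediate scales. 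Once these are in hand the $R^\epsilon$ losses coming from $K$ and $D$ are absorbed in the standard way.
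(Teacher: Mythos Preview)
The paper does not prove Theorem~\ref{highfracdim}; it is quoted as a result of Du and Zhang \cite{dz:schrodinger3} and serves only as the starting point for the paper's own contributions (Conjecture~\ref{family}, Proposition~\ref{equivalent}, Theorems~\ref{hairbrush} and~\ref{mainjj}). There is therefore no proof in this paper against which to compare your proposal.

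A brief remark on your outline versus the actual argument in \cite{dz:schrodinger3}. Your sketch --- broad--narrow, parabolic rescaling plus induction on the narrow part, and direct Guth-style polynomial partitioning of $|Ef|^2\,d\mu$ on the broad part --- is a plausible strategy, but it is not the one Du and Zhang follow. Their treatment of the broad part rests on a \emph{multilinear refined Strichartz estimate} (originating with Du--Guth--Li), invoked essentially as a black box; polynomial partitioning enters only inside the proof of that auxiliary inequality, and the cell/wall dichotomy and transverse-equidistribution step you anticipate play no role at the top level of the Du--Zhang argument. The sharp power $\gamma^{2/n}$ emerges there from combining the refined Strichartz output with a pigeonholing over the number of unit cubes inside each $K^2$-cube, together with the $\ell^2$ orthogonality coming from the narrow step. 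Your sentence that ``tracking the $\alpha$-dimensional constant carefully at every stage \ldots\ is what upgrades the naive $\gamma$ to the sharp $\gamma^{2/n}$'' is the weakest point of the proposal: in a direct polynomial-partitioning scheme the bookkeeping that isolates exactly $\gamma^{2/n}$ is genuinely delicate, and you have not indicated how the cell-count-versus-wave-packet-count tradeoff would yield that specific power rather than some other one.
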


In \cite{dz:schrodinger3}, Theorem \ref{highfracdim} was used to derive the
sharp $L^2$ estimate on the Schr\"{o}dinger maximal function (see
\cite[Theorem 1.3]{dz:schrodinger3} and the paragraph following the 
statement of \cite[Corollary 1.6]{dz:schrodinger3}). The authors of
\cite{dz:schrodinger3}, also used Theorem \ref{highfracdim} to obtain new
results on the Hausdorff dimension of the sets where Schr\"{o}dinger
solutions diverge (see \cite{ss:schdiv}), achieve progress on Falconer's 
distance set conjecture in geometric measure theory (see \cite{f:distsets}), 
and improve on the decay estimates of spherical means of Fourier transforms 
of measures (see \cite{tw:csoipaper}).

The purpose of this paper is threefold:
\begin{itemize} 
\item Show that Theorem \ref{highfracdim} is a borderline sharp Kakeya 
      result in the sense that (\ref{latticez2}) is the endpoint of a family 
	  of estimates (see (\ref{lattice}) in the statement of Conjecture
	  \ref{family}) such that each member of the family (other than 
	  (\ref{latticez2})) implies a certain sharp Kakeya result that we will 
      formulate in \S 3 below.
\item Show that the sharp Kakeya result is true in certain cases in 
      $\mbb R^3$; see Theorem \ref{hairbrush}.
\item Prove Conjecture \ref{family}	in $\mbb R^2$; see Theorem \ref{mainjj}.  
\end{itemize}

\begin{conj}[when $\beta=2/n$ or $n=2$, this is a theorem]
\label{family}
Suppose $n$, $\alpha$, $R$, $X$, and $\gamma$ are as in the statement of 
Theorem \ref{highfracdim}. 

Let $\beta$ be a parameter satisfying $1/n \leq \beta \leq 2/n$, and define 
the exponent $p$ by 
\begin{displaymath}
p = 2 + \frac{n-\alpha}{n-1} \Big( \frac{2}{n} - \beta \Big).
\end{displaymath}
Then to every $\epsilon > 0$ there is a constant $C_\epsilon$ such that
\begin{equation}
\label{lattice}
\int_X |Ef(x)|^p dx \leq C_\epsilon R^\epsilon \gamma^\beta 
                         R^{\alpha/n} \| f \|_{L^p(\mbb B^{n-1})}^p
\end{equation}
for all $f \in L^p(\mbb B^{n-1})$.
\end{conj}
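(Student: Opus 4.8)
The plan is to reduce Conjecture~\ref{family} in $\mbb R^2$ to a single endpoint estimate by interpolating against Theorem~\ref{highfracdim}, and then to prove that endpoint by running the argument behind Theorem~\ref{highfracdim} with the exponent $q$ in place of $2$, the one genuinely new point being that the Kakeya-type input needed in the plane is trivial. First I would dispose of the case $\beta = 2/n$: there $p = 2$ and \eqref{lattice} is exactly \eqref{latticez2}, so this case is Theorem~\ref{highfracdim}; since $2/n = 1$ when $n = 2$, this also takes care of $\beta = 1$. It remains to treat $n = 2$ and $1/2 \le \beta < 1$, equivalently $2 < p = 2 + (2 - \alpha)(1 - \beta) \le 3 - \alpha/2 < 4$, and here $p$ satisfies the identity $p = 4 - \alpha - \beta(2 - \alpha)$. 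Since $E$ is a fixed linear operator and restriction to the fixed set $X$ is multiplication by $\chi_X$, the Riesz--Thorin theorem applies to $E \colon L^s(\mbb B^1) \to L^s(X)$; I would interpolate between $s = 2$, where \eqref{latticez2} furnishes the operator bound with constant $(C_\epsilon R^\epsilon \gamma R^{\alpha/2})^{1/2}$, and $s = q := 3 - \alpha/2$, where I claim the operator bound holds with constant $(C_\epsilon R^\epsilon \gamma^{1/2} R^{\alpha/2})^{1/q}$ --- that is, the $\beta = 1/2$ case of \eqref{lattice}. Writing $1/p = (1 - \lambda)/2 + \lambda/q$ one finds $\lambda = 1 - (4\beta - 2)/p$, which lies in $[0, 1]$ since $1/2 \le \beta \le 1$, and a short computation using the identity above then shows that the interpolated exponents of $\gamma$, of $R^{\alpha/2}$ and of $C_\epsilon R^\epsilon$ are precisely those appearing in \eqref{lattice}. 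So everything comes down to proving, for $1 \le \alpha < 2$ and $q = 3 - \alpha/2$, that $\int_X |Ef(x)|^q \, dx \le C_\epsilon R^\epsilon \gamma^{1/2} R^{\alpha/2} \|f\|_{L^q(\mbb B^1)}^q$.

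To prove this endpoint estimate I would imitate Du and Zhang's proof of Theorem~\ref{highfracdim}, carrying the exponent $q$ where they carry $2$. After the usual wave-packet decomposition of $f$ at scale $R$ --- into arcs $\theta$ of length $R^{-1/2}$ and, for each $\theta$, plates $T$ of dimensions $R^{1/2} \times R$ with long axis normal to the parabola over $\theta$ --- and a dyadic pigeonholing (costing $R^\epsilon$) that collapses everything to a single plate amplitude, a single number of plates per arc, and a collection $\mbb T$ of plates, one applies $\ell^2 L^q$-decoupling for the parabola (available and elementary in our range $q \le 4$), in its refined form localized to the $R^{1/2}$-cubes meeting $X$, together with the $L^2$-orthogonality of distinct plate directions on $R^{1/2}$-cubes. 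This reduces the estimate to a Kakeya-type bound: one must control, for each $R^{1/2}$-cube meeting $X$, how many plates of $\mbb T$ pass through it. In the plane this is elementary --- two plates whose directions differ by a dyadic angle $\sigma \in [R^{-1/2}, 1]$ overlap in a parallelogram of dimensions about $R^{1/2} \times R^{1/2}\sigma^{-1}$, hence meet at most about $\sigma^{-1}$ of the $R^{1/2}$-cubes --- and splitting $\mbb T$ into dyadic angular separations, summing this transversality estimate, and feeding in the fractal hypothesis $\#\{\widetilde B_k : \widetilde B_k \subset \widetilde B(x', r)\} \le \gamma r^\alpha$ produces exactly the factor $\gamma^{1/2} R^{\alpha/2}$, with the two extremes $\sigma \sim R^{-1/2}$ and $\sigma \sim 1$ balancing the two exponents.

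I expect the reduction carried out in the last paragraph to be the main obstacle. A direct application of decoupling bounds $\int_X |Ef|^q$ only by a constant times $\|f\|_{L^q}^q$, with no dependence on $X$ whatsoever; what makes the fractal hypothesis enter is the refined, $X$-localized form of the decoupling inequality and the induction on scales that supports it, exactly as in the proof of Theorem~\ref{highfracdim} --- and verifying that this machinery still functions when $2$ is replaced by $q \in (2, 4)$ is where the work lies. Once that is in place, the angular summation in the Kakeya step must be done uniformly in $1 \le \alpha < 2$; in particular, in the regime where $\gamma$ exceeds $R^{(2 - \alpha)/2}$ the per-cube count $\gamma R^{\alpha/2}$ has to be capped at $R$, the number of unit cubes in an $R^{1/2}$-cube. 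The case $\beta = 1$ requires no new work, as it is Theorem~\ref{highfracdim} itself.
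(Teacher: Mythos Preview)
Your interpolation reduction is correct and is a genuinely different organizational idea from the paper's. Riesz--Thorin applied to the linear operator $f\mapsto \chi_X Ef$ between the endpoints $s=2$ (Theorem~\ref{highfracdim}) and $s=q=3-\alpha/2$ does produce the intermediate estimate \eqref{lattice} with exactly the claimed exponents; your computation of $\lambda$ checks out. So the full $n=2$ conjecture really does reduce to the single endpoint $\beta=1/2$.

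The gap is that you have not proved that endpoint. Your plan to ``carry the exponent $q$ where Du--Zhang carry $2$'' relocates all of the difficulty into the phrase ``refined $\ell^2 L^q$-decoupling localized to the $R^{1/2}$-cubes meeting $X$,'' which is not an off-the-shelf statement: the Du--Zhang machinery leans hard on $L^2$ orthogonality of wave packets, and replacing $2$ by $q>2$ means that orthogonality must be traded for decoupling \emph{with no loss in the fractal parameter $\gamma$}. You flag this yourself as ``where the work lies,'' and indeed nothing in your sketch verifies the numerology --- in particular, the asserted Kakeya output $\gamma^{1/2}R^{\alpha/2}$ is stated, not computed, and the interplay between the refined-decoupling loss and the per-cube plate count is exactly the delicate point. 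As written, the endpoint argument is a hope rather than a proof.

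The paper proceeds quite differently: it proves \eqref{lattice} (in its equivalent weighted form \eqref{weight}) directly for every $\beta\in[1/2,1]$ via a weighted \emph{bilinear} estimate and the Bourgain--Guth broad--narrow induction on the size of the support of $f$. The one new ingredient is a ``localization of the weight'' device that trades a factor $\delta^{(2-\alpha)\theta}$ for the improved $\gamma$-exponent, with $\theta=2\beta-1$. It is worth noting that at your endpoint $\beta=1/2$ one has $\theta=0$ and this device becomes vacuous; in other words, the paper's proof \emph{specialized to $\beta=1/2$} uses only the standard bilinear restriction estimate in the plane plus broad--narrow, with no novel input. So a hybrid route --- your interpolation reduction together with the paper's (now classical) argument at $\beta=1/2$ --- would give a complete alternative proof that sidesteps both the weight-localization idea and your unproven refined-decoupling claim. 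What your interpolation does not buy is any shortcut past the bilinear analysis at the endpoint; it only shows that the paper's localization-of-the-weight idea is, strictly speaking, avoidable.
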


We note that when $\beta = 2/n$, (\ref{lattice}) becomes (\ref{latticez2}),
so, to prove Conjecture \ref{family} we need to perform the following trade: 
lower the power of $\gamma$ in (\ref{latticez2}) from $2/n$ to $\beta$ in 
return for raising the Lebesgue space exponent from 2 to $p$.

We will show below that if (\ref{lattice}) holds for any $\beta < 2/n$, then
we obtain the sharp Kakeya result of \S 3. 

As noted above, in dimension $n=2$, (\ref{lattice}) is true for all 
$1/2 \leq \beta \leq 1$ (and hence Conjecture \ref{family} is a theorem in 
the plane). We will prove this in the last three sections of the paper by 
using weighted bilinear restriction estimates and the broad-narrow strategy
of \cite{bg:bgmethod}.

Before we discuss the implications of Conjecture \ref{family} to the Kakeya
problem, it will be convenient to write (\ref{lattice}) in an equivalent 
form, which is, perhaps, more user-friendly. This is the purpose of the next 
section.

\section{Writing (\ref{lattice}) in an equivalent form}

Suppose $n \geq 1$ and $0 < \alpha \leq n$. Following \cite{plms12046} (see 
also \cite{dgowwz:falconer} and \cite{pems201030}), for Lebesgue measurable 
functions $H: \mbb R^n \to [0,1]$, we define
\begin{displaymath}
A_\alpha(H)= \inf \Big\{ C : \int_{B(x_0,R)} H(x) dx \leq C R^\alpha
\mbox{ for all } x_0 \in \mbb R^n \mbox{ and } R \geq 1 \Big\},
\end{displaymath}
where $B(x_0,R)$ denotes the ball in $\mbb R^n$ of center $x_0$ and radius 
$R$. We say $H$ is a {\it weight of fractal dimension} $\alpha$ if 
$A_\alpha(H) < \infty$. We note that $A_\beta(H) \leq A_\alpha(H)$ if 
$\beta \geq \alpha$, so we are not really assigning a dimension to the 
function $H$; the phrase ``$H$ is a weight of dimension $\alpha$'' is 
merely another way for us to say that $A_\alpha(H) < \infty$. 

\begin{prop}
\label{equivalent}
Suppose $n$, $\alpha$, $R$, $X$, $\gamma$, $\beta$, and $p$ are as in the 
statement of Conjecture \ref{family}. Then the estimate 
{\rm (\ref{lattice})} holds if and only if to every $\epsilon > 0$ there is 
a constant $C_\epsilon$ such that 
\begin{equation}
\label{weight}
\int_{B(0,R)} |Ef(x)|^p H(x) dx 
\leq C_\epsilon R^\epsilon A_\alpha(H)^\beta R^{\alpha/n} 
     \| f \|_{L^p(\mbb B^{n-1})}^p
\end{equation}
for all functions $f \in L^p(\mbb B^{n-1})$ and weights $H$ of fractal 
dimension $\alpha$.
\end{prop}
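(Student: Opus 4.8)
The plan is to prove the two implications separately, with the bulk of the work in showing that the ``lattice'' estimate (\ref{lattice}) implies the ``weight'' estimate (\ref{weight}); the reverse direction is essentially a specialization.

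\emph{From (\ref{weight}) to (\ref{lattice}).} Given $X = \cup_k \widetilde B_k$ as in the statement, I would take $H = \mathbf 1_X$ (or a slight enlargement of $X$ to a union of balls, which changes constants by at most dimensional factors). A lattice unit cube $\widetilde B_k$ and the unit ball with the same center are comparable, so $\int_{B(x_0,r)} H \lesssim \#\{\widetilde B_k : \widetilde B_k \subset \widetilde B(x_0, Cr)\} + O(r^{n-1})$, and one checks directly from the definition of $\gamma$ that $A_\alpha(H) \lesssim \gamma$ (the additive $r^{n-1}$ error is harmless since $\alpha \geq 1$, and for $r$ of size $1$ the count is at most $O(1)$). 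Since $H$ is supported in $B(0, CR)$, plugging this $H$ into (\ref{weight}) and rescaling $R \mapsto CR$ (absorbing the constant into $R^\epsilon$) yields (\ref{lattice}). One must also note $\|f\|_{L^p(\mathbb B^{n-1})}$ appears on both sides identically, so no issue there.

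\emph{From (\ref{lattice}) to (\ref{weight}).} This is the main step. Given a weight $H : \mathbb R^n \to [0,1]$ with $A_\alpha(H) < \infty$, I would first reduce to $H$ supported in $B(0,R)$ and then \emph{discretize} $H$ at unit scale: partition $B(0,R)$ into lattice unit cubes $\{Q_k\}$ and for a dyadic parameter $\lambda \in \{2^{-j} : 0 \le 2^{-j}, \ 2^{-j} \gtrsim R^{-N}\}$ (the contribution of $H \le R^{-N}$ being negligible after choosing $N = N(\epsilon,n,p)$ large, using the crude bound $\|Ef\|_{L^\infty(B(0,R))} \lesssim R^{(n-1)/2}\|f\|_{L^p}$ or similar), let $X_\lambda = \cup\{Q_k : \text{average of } H \text{ over } Q_k \in [\lambda, 2\lambda)\}$. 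Then $H \lesssim \sum_\lambda \lambda\, \mathbf 1_{X_\lambda}$ on the relevant part of $B(0,R)$, and there are only $O(\log R)$ values of $\lambda$, so $R^\epsilon$ absorbs the sum. For fixed $\lambda$, the key point is the density bound: if $X_\lambda = \cup\{\widetilde B_k\}$ and $\gamma_\lambda$ is its density as in Theorem \ref{highfracdim}, then for any ball $B(x',r)$, $\#\{\widetilde B_k \subset \widetilde B(x',r)\} \cdot \lambda \lesssim \int_{B(x',Cr)} H \lesssim A_\alpha(H)\, r^\alpha$, hence $\gamma_\lambda \lesssim A_\alpha(H)/\lambda$. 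Applying (\ref{lattice}) to $X_\lambda$ gives
\begin{displaymath}
\int |Ef|^p \mathbf 1_{X_\lambda} \lesssim R^\epsilon \gamma_\lambda^\beta R^{\alpha/n}\|f\|_p^p \lesssim R^\epsilon \big(A_\alpha(H)/\lambda\big)^\beta R^{\alpha/n}\|f\|_p^p.
\end{displaymath}
Multiplying by $\lambda$ and summing over $\lambda$, I get $\int |Ef|^p H \lesssim R^\epsilon A_\alpha(H)^\beta R^{\alpha/n}\|f\|_p^p \sum_\lambda \lambda^{1-\beta}$. Here $1 - \beta \geq 1 - 2/n > 0$ for $n \geq 3$, and the dyadic sum $\sum_{\lambda \le 1} \lambda^{1-\beta}$ converges (it is a geometric series); for $n = 2$, $\beta$ can equal $1$, in which case $\sum_{\lambda} \lambda^0 = O(\log R)$ is again absorbed by $R^\epsilon$. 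This completes (\ref{weight}).

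\emph{Main obstacle.} The only genuinely delicate point is the truncation of the small values of $H$: one needs a quantitative $L^\infty$ (or $L^p \to L^\infty$) bound on $Ef$ over $B(0,R)$ to argue that the part of $H$ below $R^{-N}$ contributes at most $O(R^{-1})$, say, to $\int|Ef|^p H$ relative to the target bound; this requires knowing $\|f\|_{L^p(\mathbb B^{n-1})} \gtrsim \|f\|_{L^1}$ type relations and the trivial estimate $|Ef(x)| \le \|f\|_{L^1(\mathbb B^{n-1})} \le \|f\|_{L^p(\mathbb B^{n-1})}$, after which $\int_{\{H \le R^{-N}\}} |Ef|^p H \le R^{-N} \cdot R^n \|f\|_p^p$, which beats $R^{\alpha/n}\|f\|_p^p$ once $N > n$. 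Everything else is dyadic pigeonholing and bookkeeping, with the convergence of $\sum \lambda^{1-\beta}$ being precisely where the hypothesis $\beta \le 2/n$ (equivalently $p \ge 2$) is used in a way that mirrors the ``trade'' described after the statement of Conjecture \ref{family}.
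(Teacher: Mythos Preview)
Your overall strategy matches the paper's: in both directions you do the same thing the paper does (take $H=\chi_X$ for one implication; dyadically pigeonhole the unit-scale averages of $H$ and apply (\ref{lattice}) to each level set for the other). The density bound $\gamma_\lambda \lesssim A_\alpha(H)/\lambda$ and the summation in $\lambda^{1-\beta}$ are exactly the mechanism the paper uses, just organized slightly differently (the paper pigeonholes to a single dominant level $k_0$ and uses $\int_{\widetilde B}H\le(\int_{\widetilde B}H)^\beta$, whereas you sum over all levels; these are equivalent up to $\log R$).

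There is, however, a genuine gap. The pointwise inequality ``$H \lesssim \sum_\lambda \lambda\,\mathbf 1_{X_\lambda}$'' is false: on a unit cube $Q_k$ with average $\lambda$, the function $H$ can equal $1$ on a set of measure $\lambda$ and vanish elsewhere, so $H$ is not pointwise $\lesssim\lambda$ there. What you actually need is $\int_{Q_k}|Ef|^p H \lesssim \lambda\int_{Q_k'}|Ef|^p$ for a mild enlargement $Q_k'$, and this does \emph{not} follow from any property of $H$ alone --- it requires the locally constant property of $Ef$ at unit scale (the uncertainty principle, since $fd\sigma$ has compactly supported Fourier transform). The paper makes this step explicit: it writes $\sup_{\widetilde B}|Ef|^p \lesssim |Ef|^p\ast\psi(c(\widetilde B))$ for a rapidly decaying $\psi$, obtains $\int_{\widetilde B}|Ef|^p H \lesssim (\int_{\widetilde B}H)\cdot|Ef|^p\ast\psi(c(\widetilde B))$, and then controls the tails of $\psi$ by passing to the $R^\epsilon$-thickening $V=\bigcup B(c(\widetilde B),R^\epsilon)$. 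This enlargement forces a re-estimation of $\gamma$ for the thickened set (the paper checks $\gamma\lesssim R^{O(\epsilon)}2^{-k_0}$), which your sketch also omits. None of this is hard, but it is the one place where a property of $Ef$ beyond the hypothesis (\ref{lattice}) enters, and without it your chain of inequalities does not connect.
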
 

\begin{proof}
Let $H$ be the characteristic function of $X$. By the definition of 
$\gamma$, we have
\begin{displaymath}
\int_{\widetilde{B}(x_0,r)} H(x) dx \leq \gamma \, (r+2)^\alpha
                                    \leq \gamma \, (3r)^\alpha
\end{displaymath}
for all $x_0 \in \mbb R^n$ and $r \geq 1$. Thus $H$ is a weight on 
$\mbb R^n$ of fractal dimension $\alpha$, and 
$A_\alpha(H) \leq 3^\alpha \gamma$. This immediately shows that 
(\ref{weight}) implies (\ref{lattice}).

To prove the reverse implication, we follow
\cite[Proof of Theorem 2.2]{dz:schrodinger3}.

We consider a covering $\{ \widetilde{B} \}$ of $B(0,R)$ by unit lattice 
cubes. Since every unit cube is contained in a ball of radius $\sqrt{n}$, we 
have $\int_{\widetilde{B}} H(x) dx \leq A_\alpha(H) n^{\alpha/2}$, so, if we 
define $v(\widetilde{B})=A_\alpha(H)^{-1} \int_{\widetilde{B}} H(x) dx$ and 
$V_k = \{ \widetilde{B} : 2^{k-1} < n^{-\alpha/2} v(\widetilde{B}) \leq 
2^k \}$, then
\begin{displaymath}
B(0,R) \subset \cup \, \widetilde{B} \subset \cup_{k=-\infty}^0 V_k.
\end{displaymath}

We note that
\begin{eqnarray}
\label{later}
\lefteqn{\int_{\widetilde{B}} H(x) dx \leq 
\Big( \int_{\widetilde{B}} H(x)^{1/\beta} dx \Big)^\beta \leq
\Big( \int_{\widetilde{B}} H(x) dx \Big)^\beta} \nonumber \\
& & = \Big( A_\alpha(H) v(\widetilde{B}) \Big)^\beta \leq 
    n^{\alpha \beta/2} A_\alpha(H)^\beta 2^{k \beta}
\end{eqnarray}
for all $\widetilde{B} \in V_k$, where we have used the assumptions 
$\beta \leq 2/n \leq 1$ and $\| H \|_{L^\infty} \leq 1$.

The vast majority of the sets $V_k$ are negligible for us. In fact, letting
$k_1$ be the sup of the set $\{ k \in \mbb Z :2^k \leq R^{-1000n/\beta} \}$,
we see that 
\begin{eqnarray*}
\int_{\cup_{k=-\infty}^{k_1} \cup_{\widetilde{B} \in V_k}} |Ef(x)|^p H(x) dx
& \leq & \| f \|_{L^1(\mbb B^{n-1})}^p \sum_{k=-\infty}^{k_1} 
         \sum_{\widetilde{B} \in V_k} \int_{\widetilde{B}} H(x) dx \\
& \leq & C A_\alpha(H)^\beta \| f \|_{L^1(\mbb B^{n-1})}^p 
         \sum_{k=-\infty}^{k_1} R^n 2^{k \beta} \\
& \leq & C R^{-999n} A_\alpha(H)^\beta \| f \|_{L^1(\mbb B^{n-1})}^p,		
\end{eqnarray*}
where we used (\ref{later}) on the line before the last, and the fact that
$2^{k_1} \leq R^{-1000n/\beta}$ on the last line. Therefore, we only need to 
estimate
\begin{displaymath}
\int_{\cup_{k=k_1+1}^0 \cup_{\widetilde{B} \in V_k}} |Ef(x)|^p H(x) dx
= \sum_{k=k_1+1}^0 \sum_{\widetilde{B} \in V_k} 
  \int_{\widetilde{B}} |Ef(x)|^p H(x) dx.
\end{displaymath}

Letting $k_0 \in \{ k_1+1, k_1+2, \ldots, 0 \}$ be the integer satisfying 
\begin{displaymath}
\sum_{\widetilde{B} \in V_{k_0}} \int_{\widetilde{B}} |Ef(x)|^p H(x) dx =
\max_{k_1+1 \leq k \leq 0} \Big[ \sum_{\widetilde{B} \in V_k} 
\int_{\widetilde{B}}|Ef(x)|^p H(x) dx \Big],
\end{displaymath}
we see that
\begin{eqnarray}
\label{pigeonholing}
\lefteqn{\int_{B(0,R)} |Ef(x)|^p H(x) dx}  \nonumber \\
& \leq & (-k_1) \sum_{\widetilde{B} \in V_{k_0}} 
         \int_{\widetilde{B}} |Ef(x)|^p H(x) dx 
		 + C R^{-999n} A_\alpha(H)^\beta \| f \|_{L^1(\mbb B^{n-1})}^p.
\end{eqnarray}
Since $-k_1 \lct \log(2R)$, it follows that we only need to estimate
\begin{displaymath}
\sum_{\widetilde{B} \in V_{k_0}} \int_{\widetilde{B}} |Ef(x)|^p H(x) dx.
\end{displaymath}

We start by using the uncertainty principle in the following form. Let 
$d\sigma$ be the pushforward of the $(n-1)$-dimensional Lebesgue measure 
under the map $T : \mbb B^{n-1} \to {\mathcal P}$ given by 
$T(\omega)= (\omega, |\omega|^2)$. Since the measure $d\sigma$ is compactly 
supported and $Ef=\widehat{gd\sigma}$, where $g$ is the function on 
${\mathcal P}$ defined by the equation $f= g \circ T$, it follows that there 
is a non-negative rapidly decaying function $\psi$ on $\mbb R^n$ such that
\begin{displaymath}
\sup_{\widetilde{B}} |Ef|^p \lct |Ef|^p \ast \psi(c(\widetilde{B})),
\end{displaymath}
where $c(\widetilde{B})$ is the center of $\widetilde{B}$. Thus
\begin{displaymath}
\int_{\widetilde{B}} |Ef(x)|^p H(x) dx \lct 
\Big( \int_{\widetilde{B}} H(x) dx \Big) |Ef|^p \ast \psi(c(\widetilde{B})).
\end{displaymath}
From (\ref{later}) we know that 
$\int_{\widetilde{B}} H(x) dx \lct A_\alpha(H)^\beta 2^{k_0 \beta}$ for all
$\widetilde{B} \in V_{k_0}$. Also,
\begin{eqnarray*}
|Ef|^p \ast \psi(c(\widetilde{B}))
&   =  & \int_{B(c(\widetilde{B}),R^\epsilon)} |Ef(x)|^p 
         \psi(c(\widetilde{B})-x) dx \\
&      & + \int_{B(c(\widetilde{B}),R^\epsilon)^c} |Ef(x)|^p 
           \psi(c(\widetilde{B})-x) dx \\		 
& \lct & \int_{B(c(\widetilde{B}),R^\epsilon)} |Ef(x)|^p dx 
         + R^{-1000n} \| f \|_{L^1(\mbb B^{n-1})}^p
\end{eqnarray*}
and
\begin{equation}
\label{bdoverlap}
\sum_{\widetilde{B} \in V_{k_0}} \chi_{B(c(\widetilde{B}),R^\epsilon)} 
\lct R^{n\epsilon},
\end{equation}
so
\begin{eqnarray}
\label{localstar}
\lefteqn{\sum_{\widetilde{B} \in V_{k_0}} \int_{\widetilde{B}} |Ef(x)|^p 
         H(x) dx} \nonumber \\
& \lct & R^{n\epsilon} A_\alpha(H)^\beta 2^{k_0 \beta} \int_V |Ef(x)|^p dx 
         + A_\alpha(H)^\beta R^{-999n} \| f \|_{L^1(\mbb B^{n-1})}^p,
\end{eqnarray}
where $V= \cup_{\widetilde{B} \in V_{k_0}} B(c(\widetilde{B}),R^\epsilon)$.

We now let $\{ \widetilde{B}^* \}$ be the set of all the unit lattice cubes 
that intersect $V$, and $X= \cup \, \widetilde{B}^*$. We plan to apply 
(\ref{lattice}) on this set $X$, but we first need to estimate $\gamma$.

Let $B_r$ be a ball in $\mbb R^n$ of radius $r \geq R^\epsilon$ (if 
$1 \leq r \leq R^\epsilon$, then, clearly, $\# \{ \widetilde{B}^* : 
\widetilde{B}^* \subset B_r \} \lct R^{n \epsilon}$), and $V_r$ the subset 
of $V_{k_0}$ that consists of all unit cubes $\widetilde{B}$ such that 
$B(c(\widetilde{B}),2R^\epsilon) \cap B_r \not= \emptyset$. If $B_r$ 
intersects any of the cubes $\widetilde{B}^*$ that make up $X$, then $B_r$
intersect $B(c(\widetilde{B}),2R^\epsilon)$ for some 
$\widetilde{B} \in V_r$. Therefore, $\gamma \lct R^{n \epsilon} \#(V_r)$. 

Our assumption $r \geq R^\epsilon$, tells us that 
\begin{displaymath}
\cup_{\widetilde{B} \in V_r} B(c(\widetilde{B}),2R^\epsilon) \subset B_{5r},
\end{displaymath}
so (using (\ref{bdoverlap}))
\begin{eqnarray*}
\lefteqn{R^{n \epsilon} \int_{B_{5r}} H(x) dx
\gct \sum_{\widetilde{B} \in V_r} \int_{B(c(\widetilde{B}),2R^\epsilon)} 
H(x) dx} \\
& & \geq \sum_{\widetilde{B} \in V_r} \int_{\widetilde{B}} H(x) dx
    = \sum_{\widetilde{B} \in V_r} v(\widetilde{B}) A_\alpha(H)
    \geq \#(V_r) \, n^{\alpha/2} 2^{k_0-1} A_\alpha(H).
\end{eqnarray*}
On the other hand,
\begin{displaymath}
\int_{B_{5r}} H(x) dx \leq A_\alpha(H) (5r)^\alpha,
\end{displaymath}
so $\#(V_r) \lct R^{n \epsilon} 2^{-k_0} r^\alpha$, and so
$\gamma \lct R^{2n\epsilon} 2^{-k_0}$.

Applying (\ref{lattice}), we now obtain
\begin{displaymath}
\int_V |Ef(x)|^p dx \leq \int_X |Ef(x)|^p dx \lct 
R^{5\epsilon} 2^{-k_0 \beta} R^{\alpha/n} \| f \|_{L^p(\mbb B^{n-1})}^p,
\end{displaymath}
which, combined with (\ref{pigeonholing}) and (\ref{localstar}), implies
that
\begin{eqnarray*}
\int_{B(0,R)} |Ef(x)|^p H(x) dx 
& \lct & R^{(n+6)\epsilon} (2^{k_0})^{\beta-\beta} A_\alpha(H)^\beta 
         R^{\alpha/n} \| f \|_{L^p(\mbb B^{n-1})}^p \\
&  =   & R^{(n+6)\epsilon} A_\alpha(H)^\beta R^{\alpha/n} 
         \| f \|_{L^p(\mbb B^{n-1})}^p,
\end{eqnarray*}
which is our desired estimate (\ref{weight}).
\end{proof}

\section{Conjecture \ref{family} implies a sharp Kakeya result}

Let $\Omega$ be a subset of $\mbb R^n$ that obeys the following property: 
there is a number $\alpha$ between 1 and $n$ such that 
\begin{equation}
\label{omegadim}
|\Omega \cap B_R| \leq C R^\alpha
\end{equation} 
for all balls $B_R$ in $\mbb R^n$ of radius $R \geq 1$. (Here, and 
throughout the paper, $|\mbox{set}| =$ Lebesgue measure of the set.)
 
For large $L$, we divide the unit paraboloid ${\mathcal P}$ into finitely 
overlapping caps $\theta_j$ each of radius $L^{-1}$, and we associate with 
each $\theta_j$ a family $\mbb T_j$ of parallel $1 \times L$ tubes that tile 
$\mbb R^n$ and point in the direction normal to $\theta_j$ at its center. We 
let $N$ be the cardinality of the set 
\begin{equation}
\label{defofJ}
J = \{ j : \mbox{there is a tube of $\mbb T_j$ that lies in 
$\Omega \cap B(0,5L)$} \}.
\end{equation}

It is easy to see that the Kakeya conjecture (in its maximal operator form)  
implies the following bound on $N$: to every $\epsilon > 0$ there is a 
constant $C_\epsilon$ such that
\begin{equation}
\label{sharpbd}
N \leq C_\epsilon L^\epsilon L^{\alpha-1}
\end{equation}
for all $L \geq 1$. In fact, \cite[Proposition 2.2]{demeter:dim4} presents a
proof of the fact that the Kakeya conjecture implies (\ref{sharpbd}) in the 
case when $\Omega$ is a neighborhood of an algebraic variety. This proof 
easily extends to general sets $\Omega$ satisfying (\ref{omegadim}). (For 
the connection between neighborhoods of algebraic varieties and the 
condition (\ref{omegadim}), we refer the reader to \cite{w:tubular}.)

We note that (\ref{sharpbd}) implies that if $\Omega \cap B(0,5L)$ contains
at least one tube from each direction (i.e.\ at least one tube from each of 
the $\sim L^{n-1}$ families $\mbb T_j$), then $\alpha = n$. 

In the special case when $\Omega$ is a neighborhood of an algebraic variety, 
this bound on $N$ was proved by Guth \cite{guth:poly} in $\mbb R^3$, 
conjectured by Guth \cite{guth:poly2} to be true in $\mbb R^n$ for all 
$n \geq 3$, and proved by Zahl \cite{jz:severi} in $\mbb R^4$; see also
\cite{gz:polyaxioms}. The conjecture of \cite{guth:poly2} was then settled 
in all dimensions by Katz and Rogers in \cite{katzrogers}.

In this section we prove that if (\ref{weight}) (or equivalently 
(\ref{lattice})) holds for any $1/n \leq \beta < 2/n$, then (\ref{sharpbd}) 
will follow.

We first write the set $J$ as $\{ j_1, j_2, \ldots, j_N \}$, and for each
$1 \leq l \leq N$, we let $T_l$ be a tube from $\mbb T_{j_l}$ that lies in
$\Omega \cap B(0,5L) = \Omega \cap B_{5L}$. Then  
\begin{eqnarray*}
\lefteqn{NL = \sum_{l=1}^N |T_l| 
         = \sum_{l=1}^N \int_{B_{5L} \cap \, \Omega} \chi_{T_l}(x) \, dx 
		 = \int_{B_{5L} \cap \, \Omega} \sum_{l=1}^N \chi_{T_l}(x) \, dx} \\ 
& & \!\!\!\!\!\!\!\! = L^{2(n-1)} \int_{B_{5L} \cap \, \Omega} \sum_{l=1}^N 
      \Big( \frac{1}{L^{n-1}} \, \chi_{T_l}(x) \Big)^2 dx \lct
	  L^{2(n-1)} \int_{B_{5L} \cap \, \Omega} \sum_{l=1}^N |Ef_l(Lx)|^2 dx,
\end{eqnarray*}
where $f_l$ is supported in $\theta_l$ (in fact, $f_l$ is supported in the 
projection of $\theta_l$ into $\mbb B^{n-1}$) and 
$|Ef_l| \geq |\theta_l| \, \chi_{T_l^*}$, where $T_l^*$ is an appropriate 
$L \times L^2$ tube of the same direction as $T_l$. More precisely,
\begin{displaymath}
|Ef_l(L x)  \geq |\theta_l| \, \chi_{T_l^*}(Lx)= |\theta_l| \, \chi_{T_l}(x)
\gct \frac{1}{L^{n-1}} \, \chi_{T_l}(x)
\end{displaymath}
for all $x \in \mbb R^n$. Letting $H = \chi_\Omega$, we arrive at
\begin{displaymath}
NL \lct L^{2(n-1)} \int_{B_{5L}} \sum_{l=1}^N |Ef_l(Lx)|^2 H(x) dx.
\end{displaymath}

Next, we let $\epsilon_l = \pm 1$ be random signs, define the function 
$f: \mbb B^{n-1} \to \mbb C$ by $f = \sum_{l=1}^N \epsilon_l f_l$, and use
Khintchin's inequality to get
\begin{displaymath}
NL 
\lct L^{2(n-1)} {\mathcal E} \Big( \int_{B_{5L}} |Ef(Lx)|^2 H(x) dx \Big),
\end{displaymath}
where ${\mathcal E}$ is the expectation sign. Since $p \geq 2$, we can apply
H\"{o}lder's inequality in the inner integral to get
\begin{eqnarray*}
N L 
& \lct & L^{2(n-1)} \Big( \int_{B_{5L}} H(x) dx \Big)^{1-(2/p)} 
         {\mathcal E} \Big( \int_{B_{5L}} |Ef(Lx)|^p H(x) dx \Big)^{2/p} \\
& \lct & L^{2(n-1)} L^{\alpha(1-(2/p))} 
         {\mathcal E} \Big( \int_{B_{5L}} |Ef(Lx)|^p H(x) dx \Big)^{2/p}.
\end{eqnarray*}
Applying the change of variables $u=Lx$ and defining the weight $H^*$ by
$H^*(u)= H(x)= H(u/L)$, this becomes
\begin{displaymath}
N L \lct L^{2(n-1)} L^{\alpha(1-(2/p))} L^{-2n/p} 
         {\mathcal E} \Big( \int_{B_{5L^2}} |Ef(u)|^p H^*(u) du \Big)^{2/p},
\end{displaymath}
so that
\begin{equation}
\label{readyforrest}
N L^{3-n} \lct L^{(n+\alpha)(1-(2/p))} 
         {\mathcal E} \Big( \int_{B_{5L^2}} |Ef(u)|^p H^*(u) du \Big)^{2/p}.
\end{equation}

We note that
\begin{eqnarray*}
\lefteqn{\int_{B(u_0,R)} H^*(u) du = L^n \int_{B(u_0/L,R/L)} H(x) dx} \\
& & \leq L^n A_\alpha(H) \big( \frac{R}{L} \big)^\alpha 
    = L^{n-\alpha} A_\alpha(H) R^\alpha
\end{eqnarray*}
if $R \geq L$. On the other hand, if $R \leq L$, then
\begin{displaymath}
\int_{B(u_0,R)} H^*(u) du \lct R^n = R^{n-\alpha} R^\alpha 
\leq L^{n-\alpha} R^\alpha.
\end{displaymath}
Therefore,
\begin{displaymath}
A_\alpha(H^*) \lct L^{n-\alpha}.
\end{displaymath}

We are now in a good shape to apply (\ref{weight}), which tells us that
\begin{eqnarray*}
\int_{B_{5L^2}} |Ef(u)|^p H^*(u) du
& \lct & (L^2)^\epsilon A_\alpha(H^*)^\beta (L^2)^{\alpha/n} 
         \| f \|_{L^p(\mbb B^{n-1})}^p \\
& \lct & L^{2 \epsilon} L^{(n-\alpha)\beta} L^{2\alpha/n} 
         \frac{N}{L^{n-1}}.
\end{eqnarray*}
Inserting this back in (\ref{readyforrest}), we get
\begin{eqnarray*}
N L^{3-n} \lct L^{2 \epsilon} L^{(n+\alpha)(1-(2/p))} 
             \Big( L^{(n-\alpha)\beta} L^{2\alpha /n} L^{1-n} N \Big)^{2/p},	 		
\end{eqnarray*}
so that
\begin{displaymath}
N^{1-(2/p)} L^{3-n} \lct L^{2 \epsilon} L^{(n+\alpha)(1-(2/p))} 
\Big( L^{(n-\alpha)(\beta-\frac{2}{n}) + 2 - \frac{2\alpha}{n}} 
      L^{\frac{2\alpha}{n}} \frac{L^{-2}}{L^{n-3}} \Big)^{2/p},
\end{displaymath}
so that
\begin{displaymath}
N^{1-(2/p)} \lct L^{2 \epsilon} L^{(n-3)(1-(2/p))} L^{(n+\alpha)(1-(2/p))} 
                 L^{(n-\alpha)(\beta-\frac{2}{n})(\frac{2}{p})}.
\end{displaymath}
Therefore,
\begin{displaymath}
N \lct L^{O(\epsilon)} L^{n-3} L^{\alpha} L^{n} 
       L^{\frac{(n-\alpha)(\beta-\frac{2}{n})(\frac{2}{p})}
	           {1- \frac{2}{p}}}
= L^{O(\epsilon)} L^{2n-3+\alpha}
  L^{\frac{(n-\alpha)(\beta-\frac{2}{n})}{\frac{p}{2}-1}}.
\end{displaymath}
But
\begin{displaymath}
\frac{(n-\alpha)(\beta-\frac{2}{n})}{\frac{p}{2}-1} 
= (n-\alpha)(\beta-\frac{2}{n})
  \frac{2(n-1)}{(n-\alpha)(\frac{2}{n}-\beta)} = - 2(n-1) = 2 - 2n,	
\end{displaymath}
so
\begin{displaymath}
N \lct L^{O(\epsilon)} L^{2n-3+\alpha+2-2n} 
     = L^{O(\epsilon)} L^{\alpha - 1}.
\end{displaymath}

\section{Proof of (\ref{sharpbd}) in the regime $1 \leq \alpha \leq 2$ in 
$\mbb R^3$}

The fact that the Kakeya conjecture is true in $\mbb R^2$ tells us that
(\ref{sharpbd}) is also true there. In this section, we use Wolff's 
hairbrush argument from \cite{tw:hairbrush}, as adapted by Guth in 
\cite{guth:poly}, to prove the following bound on $N$.

\begin{thm}
\label{hairbrush}
In $\mbb R^3$, we have
\begin{displaymath}
N \lct \left\{ \begin{array}{ll}
               (\log L) L^{\alpha-1} & \mbox{ if $1 \leq \alpha \leq 2$,} \\
			   (\log L) L^{2\alpha-3} & \mbox{ if $2 \leq \alpha \leq 3$.}
			   \end{array} \right.
\end{displaymath}
\end{thm}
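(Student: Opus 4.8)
The plan is to adapt Wolff's hairbrush argument \cite{tw:hairbrush}, in the streamlined form used by Guth \cite{guth:poly}, feeding the fractal hypothesis (\ref{omegadim}) into it at every scale in place of the trivial volume bounds. First I would rescale by the factor $L^{-1}$: after this the tubes of (\ref{defofJ}) become $\delta\times\delta\times1$ tubes with $\delta=L^{-1}$, pointing in $\delta$-separated directions, each contained in $\Omega'\cap B(0,5)$, where $\Omega'=L^{-1}\Omega$ satisfies $|\Omega'\cap B(x,r)|\lct\delta^{3-\alpha}r^\alpha$ for all $x$ and all $r\geq\delta$ by (\ref{omegadim}). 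In this normalisation the two assertions of Theorem \ref{hairbrush} merge into the single bound $N\lct(\log L)\max(\delta^{1-\alpha},\delta^{3-2\alpha})$, the first term dominating for $1\leq\alpha\leq2$ and the second for $2\leq\alpha\leq3$, so it suffices to prove this.

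Next I would carry out the dyadic pigeonholing responsible for the factor $\log L$. With $g=\sum_l\chi_{T_l}$, decomposing $\int g=N\delta^2$ over the $O(\log L)$ dyadic multiplicity levels produces a level $\lambda$ and a set $S$, on which $g\sim\lambda$, carrying a $\gct(\log L)^{-1}$ fraction of the total mass; since $S\subset\Omega'\cap B(0,5)$, comparing $\lambda|S|\gct\int_S g\gct N\delta^2/\log L$ with $|S|\lct\delta^{3-\alpha}$ gives $N\lct(\log L)\,\lambda\,\delta^{1-\alpha}$, so the whole problem reduces to the multiplicity estimate $\lambda\lct(\log L)^{O(1)}\max(1,\delta^{2-\alpha})$. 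The same pigeonholing retains a positive fraction of the tubes on which $|T_l\cap S|\gct\delta^2/\log L$, and a routine averaging then selects, as the spine of the hairbrush, one such tube $T_0$ which is met inside $S$ --- along distinct, $\delta$-separated directions --- by $M\gct\lambda/\log L$ of the others, here using that any two of these tubes intersect in volume $\lct\delta^2$.

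The heart of the matter is Wolff's analysis of the hairbrush $\mathcal H=\bigcup T_l$ over these $M$ tubes. One groups the tubes by the $2$-plane through the core line $\ell_0$ of $T_0$ that most nearly contains each of them. Inside a single planar leaf the tubes form a two-dimensional Kakeya configuration lying in a $\delta$-slab of $\Omega'$, on which the bookkeeping reads ``$\Omega'$ has two-dimensional measure $\lct\delta^{2-\alpha}r^\alpha$ on every ball of radius $r$''; so the tubes of a leaf are essentially disjoint by the C\'{o}rdoba $L^2$ estimate, while two leaves of angular separation $\omega$ are disjoint outside the $(\delta/\omega)$-neighbourhood of $\ell_0$. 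One now runs this bookkeeping scale by scale: for each dyadic $\rho\in[\delta,1]$ the portion of $\mathcal H$ at distance $\sim\rho$ from $\ell_0$ sits in an annular piece of $\Omega'$ of volume $\lct\delta^{3-\alpha}\rho^\alpha$, to be confronted with the sum over leaves of the planar lower bounds. Together with a soft two-ends reduction, preventing any tube from concentrating its $S$-mass in a small sub-ball, this drives a dichotomy: either the bush behaviour near $\ell_0$ already forces $\lambda\lct(\log L)^{O(1)}$, or the hairbrush genuinely spreads and the fractal bound on its volume forces $\lambda\lct(\log L)^{O(1)}\delta^{2-\alpha}$. Either way $\lambda\lct(\log L)^{O(1)}\max(1,\delta^{2-\alpha})$, and feeding this into $N\lct(\log L)\lambda\delta^{1-\alpha}$ finishes the proof, the exponent of $\log L$ being trimmed to $1$ by handling the pigeonholing more carefully.

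The step I expect to be the main obstacle is exactly this last one: making (\ref{omegadim}) interact correctly with the hairbrush geometry at \emph{all} scales $\rho\in[\delta,1]$ simultaneously --- a short computation shows that using only $|\Omega'\cap B(0,5)|\lct\delta^{3-\alpha}$ is too weak and yields merely $N\lct L^{2\alpha-2}$ --- while at the same time keeping the accumulated loss to a single power of $\log L$, which in particular forces one to use a soft (logarithmic, not $L^{\epsilon}$) form of the two-ends reduction. A secondary technical point is to check that C\'{o}rdoba's planar Kakeya estimate may be applied inside a $\delta$-slab of the fractal set $\Omega'$ with the dimensional gain $\delta^{2-\alpha}$ recorded above; this should be routine.
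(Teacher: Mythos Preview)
Your overall strategy --- Wolff's hairbrush, adapted to the fractal hypothesis (\ref{omegadim}) --- is the same as the paper's, but your execution is both substantially more elaborate than necessary and, by your own admission, incomplete at the decisive step.

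The paper does not pigeonhole on the multiplicity $\lambda$ and then try to bound $\lambda$; it works at the original scale, counts incidence triples $(\widetilde{B},T_i,T_j)$ via Cauchy--Schwarz (getting $\gct N^2 L^{2-\alpha}$ of them), and then pigeonholes on the \emph{angle} $\theta=2^{l_0}/L$ between $T_i$ and $T_j$.  This single move is what makes everything else trivial: once all bristles make angle $\sim\theta$ with the spine $T$, the entire hairbrush $\mathbb H$ is contained in one box of dimensions $L\times\theta L\times\theta L$, and a single application of (\ref{omegadim}) (cover the box by $\sim\theta^{-1}$ balls of radius $\theta L$) gives $|\cup_{T_j\in\mathbb H}T_j|\lct\theta^{-1}(\theta L)^\alpha$.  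Combined with the black-box ``moral disjointness'' of the bristles \cite[Lemma 4.9]{guth:poly}, this yields $\#\mathbb H\lct(\theta L)^{\alpha-1}$; comparing with the lower bound $\#\mathbb H\gct\theta N L^{2-\alpha}/\log L$ gives $N\lct(\log L)\,\theta^{\alpha-2}L^{2\alpha-3}$, and now the two ranges of $\alpha$ are read off from $\theta\le1$ and $\theta\ge1/L$.  There is no planar-leaf decomposition, no C\'ordoba estimate inside slabs of $\Omega$, no scale-by-scale analysis in $\rho$, and no two-ends reduction; the single $\log L$ comes from the one angle-pigeonholing.

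By contrast, your route reduces to the multiplicity bound $\lambda\lct(\log L)^{O(1)}\max(1,\delta^{2-\alpha})$ and then proposes to prove it via the full Wolff machinery.  You correctly identify the crux --- making (\ref{omegadim}) interact with the hairbrush at every scale $\rho$ simultaneously --- but you do not actually carry it out, and the further claim that the accumulated log-losses can be ``trimmed to~$1$'' is asserted without argument.  So as written the proposal has a genuine gap at its main step; and the paper's proof shows that this gap can be bypassed entirely by pigeonholing on angle rather than on multiplicity, replacing your multi-scale analysis with one box and one use of (\ref{omegadim}).
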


\begin{proof}
Let $\Omega$ be a subset of $\mbb R^3$ that obeys (\ref{omegadim}). As we 
did in the previous section, for large $L$, we consider a decomposition 
$\{ \theta_j \}$ of ${\mathcal P}$ into finitely overlapping caps each of 
radius $L^{-1}$, and we associate with each $\theta_j$ a family $\mbb T_j$ 
of parallel $1 \times L$ tubes that tile $\mbb R^3$ and point in the 
direction of the normal vector $v_j$ of ${\mathcal P}$ at the center of 
$\theta_j$. The quantity $N$ that we need to estimate is the cardinality of 
the set $J$ as defined in (\ref{defofJ}).

For each $j \in J$, we let $T_j$ be the member of $\mbb T_j$ that lies in
$\Omega \cap B(0,5L)$, and $S= \{ T_j \}$. Of course, $N = \#(S)$.   

We tile $\Omega \cap B(0,5L)$ by unit lattice cubes $\widetilde{B}$. Then
(\ref{omegadim}) tells us that 
\begin{equation}
\label{classes}
\#(\{ \widetilde{B} \}) \lct L^\alpha.
\end{equation}
Also, each tube $T_j$ intersects $\sim L$ of the cubes $\widetilde{B}$. 

We now define the function $f : \{ \widetilde{B} \} \to \mbb Z$ by
\begin{displaymath}
f(\widetilde{B}) 
= \# \{ T_j \in S : T_j \cap \widetilde{B} \not= \emptyset \}.
\end{displaymath}
Then
\begin{displaymath}
\sum_{\widetilde{B}} f(\widetilde{B}) \sim N L.
\end{displaymath}
So, by Cauchy-Schwarz and (\ref{classes}),
\begin{displaymath}
N L \lct \Big( \sum_{\widetilde{B}} f(\widetilde{B})^2 \Big)^{1/2} 
         \Big( \#(\{ \widetilde{B} \}) \Big)^{1/2}
\lct \Big( \sum_{\widetilde{B}} f(\widetilde{B})^2 \Big)^{1/2} L^{\alpha/2},
\end{displaymath}
and so
\begin{displaymath}
\sum_{\widetilde{B}} f(\widetilde{B})^2 \gct N^2 L^{2 - \alpha},
\end{displaymath}
which means that the set
\begin{displaymath}
\{ (\widetilde{B}, T_i, T_j) : T_i, T_j \in S, \,
T_i \cap \widetilde{B} \not= \emptyset, \mbox{ and } 
T_j \cap \widetilde{B} \not= \emptyset \}
\end{displaymath}
has cardinality $\gct N^2 L^{2 - \alpha}$. Therefore, the set
\begin{displaymath}
X = \{ (\widetilde{B}, T_i, T_j) : T_i, T_j \in S, \, 
T_i \cap \widetilde{B} \not= \emptyset, \, 
T_j \cap \widetilde{B} \not= \emptyset \mbox{ and } i \not= j \}
\end{displaymath}
has cardinality 
\begin{displaymath}
\geq C_1 N^2 L^{2 - \alpha} - \sum_{\widetilde{B}} f(\widetilde{B})
\geq C_1 N^2 L^{2 - \alpha} - C_2 N L.
\end{displaymath}
If $C_1 N^2 L^{2 - \alpha} \leq 5 C_2 N L$, then 
$N \leq (5C_2/C_1) L^{\alpha-1}$ and the theorem will be proved. So, we may
assume that $N \geq C_3 L^{\alpha-1}$ for some large constant $C_3$. 
Therefore, $\#(X) \gct N^2 L^{2 - \alpha}$.

For $l \in \mbb N$, we define $X_l$ to be the subset of $X$ for which
\begin{displaymath}
\frac{2^{l-1}}{L} \leq \mbox{Angle}(v_i,v_j) \leq \frac{2^l}{L}. 
\end{displaymath} 
Since the angle between any two tubes in our set $S$ ranges between $L^{-1}$ 
and $1$, it follows by the pigeonhole principle that 
$\#(X) \lct (\log L) \#(X_{l_0})$ for some $l_0 \in \mbb N$. Denoting 
$2^{l_0} L^{-1}$ by $\theta$, and $X_{l_0}$ by $X'$, we have 
$L^{-1} \leq \theta \leq 1$ and 
$\#(X') \gct N^2 L^{2 - \alpha} (\log L)^{-1}$.

There are $N$ tubes in $S$. By the pigeonhole principle, one of the tubes 
must appear in 
$\gct N^2 L^{2 - \alpha} (\log L)^{-1}/N = N L^{2 - \alpha} (\log L)^{-1}$ 
of the elements of $X'$. We call this tube $T$, and we define
\begin{displaymath}
\mbb H = \{ T_j \in S : (\widetilde{B}, T, T_j) \in X' \}.
\end{displaymath}
Let $v$ be the direction of the tube $T$. Since the angle between $v$ and
$v_j$ is $\sim \theta$, it follows that $|T \cap T_j| \lct \theta^{-1}$. So, 
the set $\{ \widetilde{B} : (\widetilde{B}, T, T_j) \in X' \}$ has cardinality 
$\lct \theta^{-1}$, and so 
\begin{displaymath}
\#(\mbb H) \gct \frac{N L^{2 - \alpha} (\log L)^{-1}}{\theta^{-1}}
= \theta N L^{2 - \alpha} (\log L)^{-1}.
\end{displaymath}

To finish the proof, we need to also have an upper bound on $\#(\mbb H)$. We 
first observe that
\begin{displaymath}
\bigcup_{T_j \in \mbb H} T_j \subset \Omega \cap \mbox{\bf B}, 
\end{displaymath}
where {\bf B} is a box in $\mbb R^3$ of dimensions 
$L \times \theta L \times \theta L$. Since {\bf B} can be covered by 
$\sim L/(\theta L)$ balls of radius $\theta L$, and since $\theta L \geq 1$, 
the dimensionality property (\ref{omegadim}) tells us that
\begin{displaymath}
\Big| \bigcup_{T_j \in \mbb H} T_j \Big| \lct \theta^{-1} (\theta L)^\alpha.
\end{displaymath}
Next, we use the (by now) standard fact that the tubes $T_j$ in $\mbb H$ are 
morally disjoint (see \cite[Lemma 4.9]{guth:poly} for a very nice 
explanation of this idea) to see that
\begin{displaymath}
\Big| \bigcup_{T_j \in \mbb H} T_j \Big| \gct \#(\mbb H) \, |T_j| 
= \#(\mbb H) \, L.
\end{displaymath}
Therefore,
\begin{displaymath}
\#(\mbb H) \lct \theta^{-1} L^{-1} (\theta L)^\alpha 
= (\theta L)^{\alpha-1}.
\end{displaymath}

Comparing the lower and upper bounds we now have on the cardinality of 
$\mbb H$, we conclude that
\begin{displaymath}
\theta N L^{2 - \alpha} (\log L)^{-1} \lct (\theta L)^{\alpha-1}.
\end{displaymath}
Therefore,
\begin{displaymath}
N \lct (\log L) \theta^{\alpha - 2} L^{2 \alpha - 3}.
\end{displaymath}

If $\alpha \geq 2$, then the fact that $\theta \leq 1$ tells us that
\begin{displaymath}
N \leq (\log L) L^{2 \alpha - 3}.
\end{displaymath}

If $1 \leq \alpha < 2$, then the fact that $\theta \geq 1/L$ tells us that
\begin{displaymath}
N \lct (\log L) (L)^{2 - \alpha} L^{2 \alpha - 3} = (\log L) L^{\alpha - 1}.
\end{displaymath}

It might be interesting for the reader to observe that the sharp result that
we get in the case $1 \leq \alpha < 2$ is due to the fact that we are
using `substantial' information about $\theta$ (namely, $\theta \geq 1/L$),
whereas in the $2 \leq \alpha \leq 3$ we only can use the relatively 
`unsubstantial' information that $\theta \leq 1$.
\end{proof}

\section{Proof of Conjecture \ref{family} in the plane}

The rest of the paper is concerned in proving that Conjecture 2.1 is true in 
$\mbb R^2$. In view of Proposition \ref{equivalent}, this task will be 
accomplished as soon as we prove Theorem \ref{mainjj} below.

We alert the reader that the extension operator in Theorem \ref{mainjj} is 
the one associated with the unit circle $\mbb S^1 \subset \mbb R^2$ and is
given by
\begin{displaymath}
Ef(x) = \int e^{-2 \pi i x \cdot \xi} f(\xi) d\sigma(\xi)
\end{displaymath}
for $f \in L^1(\sigma)$, where $\sigma$ is induced Lebesgue measure on 
$\mbb S^1$. The proof for the extension operator associated with the unit
parabola is similar (and a little easier).

\begin{thm}
\label{mainjj}
Suppose $1 \leq \alpha \leq 2$ and $R \geq 1$. Let $\beta$ be a parameter 
satisfying $1/2 \leq \beta \leq 1$, and define the exponent $p$ by 
\begin{displaymath}
p = 2 + (2 - \alpha)(1 - \beta).
\end{displaymath}
Then to every $\epsilon > 0$ there is a constant $C_\epsilon$ such that
\begin{equation}
\label{mainest}
\int_{B(0,R)} |Ef(x)|^p H(x) dx \leq C_\epsilon R^\epsilon 
A_\alpha(H)^\beta R^{\alpha/2} \| f \|_{L^p(\sigma)}^p
\end{equation}
for all functions $f \in L^p(\sigma)$ and weights $H$ of fractal dimension 
$\alpha$.
\end{thm}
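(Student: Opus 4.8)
The plan is to run the broad-narrow dichotomy of \cite{bg:bgmethod} at scale $R$ (really at the $L^2$-critical scale $R^2$ after the usual parabolic rescaling, but I will work directly with the circle). Decompose $\mbb S^1$ into $\sim R^{1/2}$ arcs $\tau$ of length $R^{-1/2}$, write $f = \sum_\tau f_\tau$, and for each ball $B = \widetilde{B}(x,R^{1/2})$ of side $R^{1/2}$ tiling $B(0,R)$ declare $B$ \emph{broad} if at least two arcs $\tau_1,\tau_2$ with $\mathrm{dist}(\tau_1,\tau_2)\gtrsim 1$ each contribute a comparable fraction of $|Ef|$ on $B$, and \emph{narrow} otherwise. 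On broad balls one has the pointwise bound $|Ef|^p \lesssim (|Ef_{\tau_1}|\,|Ef_{\tau_2}|)^{p/2}$ up to $R^{O(\epsilon)}$ losses, so the broad contribution is controlled by a \emph{weighted bilinear restriction estimate}: I will use the classical $L^2$ bilinear theorem for the circle (Tao's bilinear estimate, or in two dimensions just the elementary almost-orthogonality / convolution-of-arcs argument) in its weighted form, which for transverse arcs reads
\begin{displaymath}
\int_{B(0,R)} |Ef_{\tau_1}(x) Ef_{\tau_2}(x)|\, H(x)\, dx \lesssim R^{\epsilon} A_\alpha(H)^{1/2} R^{\alpha/4}\,\|f_{\tau_1}\|_{L^2(\sigma)}\|f_{\tau_2}\|_{L^2(\sigma)};
\end{displaymath}
this is exactly the $\beta=1/2$, $p=2$ endpoint of the desired inequality restricted to a bilinear pair, and it is the two-dimensional case of the Du--Zhang-type fractal $L^2$ bound, so it is available. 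The narrow balls are handled by induction on scale: on a narrow ball all of $|Ef|$ comes from a single arc $\tau$ of length $R^{-1/2}$ (or a $\lesssim 1$-separated family that can be grouped into $O(1)$ such arcs), parabolic rescaling maps $\tau$ back to the full circle at scale $R^{1/2}$, the weight $H$ restricted to $B$ rescales to a weight of the same dimension $\alpha$ with $A_\alpha$ comparable after accounting for the Jacobian, and one applies the inductive hypothesis at scale $R^{1/2}$, summing the $\sim R^{1/2}$ narrow balls.

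The key step is getting the bookkeeping of the three exponents $(p,\beta,\alpha)$ to close under this iteration. I would first establish the two endpoint cases directly: $\beta = 1/2$ (where $p = 2 + (2-\alpha)/2$) and $\beta = 1$ (where $p = 2$ and the inequality is the trivial $L^2$ orthogonality bound $\int |Ef|^2 H \le A_\alpha(H) R^{\alpha/2}\|f\|_2^2$ after noting $|Ef|^2$ has Fourier support in a ball of radius $2$, or simply Plancherel on $B(0,R)$), and then obtain all intermediate $\beta$ by interpolating the two weighted estimates — since both the Lebesgue exponent $p$ and the power $\beta$ of $A_\alpha(H)$ vary affinely, a single application of Hölder/interpolation in the pair $(\|f\|_{L^p}, A_\alpha(H)^\beta)$ should interpolate the $\beta=1/2$ estimate against the $\beta=1$ estimate to give the stated line. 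In fact this reduces the whole theorem to the single case $\beta = 1/2$.

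So the real work is the $\beta = 1/2$ case: prove
\begin{displaymath}
\int_{B(0,R)} |Ef(x)|^{2+\frac{2-\alpha}{2}} H(x)\, dx \le C_\epsilon R^\epsilon A_\alpha(H)^{1/2} R^{\alpha/2}\|f\|_{L^{p}(\sigma)}^{p}.
\end{displaymath}
Here I run broad-narrow: the broad part goes into the weighted bilinear $L^2$ estimate above, which after Hölder in the bilinear pair and the $p \ge 2$ embedding $\|f_\tau\|_{L^2} \lesssim R^{-\epsilon'}(\#\tau)^{\cdots}\|f_\tau\|_{L^p}$ (together with the standard summation over the $\lesssim R^{1/2}$ arcs, using that there are only $R^\epsilon$ essentially-distinct bilinear pairs to consider at a given scale) yields the claimed right-hand side with the correct power of $R^{\alpha/2}$ — one checks the exponent arithmetic: the bilinear estimate supplies $R^{\alpha/4}$ twice after the dyadic pigeonholing over separations, i.e. $A_\alpha(H)^{1/2}R^{\alpha/4}$, and the remaining $R^{\alpha/4}$ comes from $\int H \le A_\alpha(H)R^\alpha$ losses absorbed by a Hölder step exactly as in \S 3. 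The narrow part is a clean induction on $R \mapsto R^{1/2}$, and the induction closes because the exponent of $R$ in the loss is $\epsilon$, so iterating $\log\log R$ times costs only $R^{O(\epsilon)}$. \textbf{The main obstacle} I anticipate is the weighted bilinear estimate itself with the \emph{correct fractal power} $A_\alpha(H)^{1/2}R^{\alpha/4}$ (rather than the trivial $A_\alpha(H)^{1/2}R^{\alpha/2}$ one would get by crudely inserting $H \le A_\alpha(H)R^\alpha$ into the unweighted bilinear bound): this gain of $R^{\alpha/4}$ requires exploiting the transversality geometry against the dimensional spread of $H$ — essentially, that $|Ef_{\tau_1}Ef_{\tau_2}|$ lives on a union of $\sim R^{1/2}$ transverse $1 \times R^{1/2}$ parallelograms (after the convolution-of-arcs computation $\widehat{E f_{\tau_1} Ef_{\tau_2}}$ is supported in a rectangle), and one must see that the $H$-mass picked up on this set is $\lesssim A_\alpha(H)^{1/2} R^{\alpha/4}(\ldots)$ by a Cauchy--Schwarz against the dual $\alpha$-dimensional counting bound. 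This is the heart of the two-dimensional argument and is where the last three sections of the paper presumably do the careful work.
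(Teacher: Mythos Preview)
Your broad--narrow strategy is the right overall framework and matches the paper's. The interpolation idea---reduce the range $1/2\le\beta\le 1$ to the two endpoints via Riesz--Thorin for the linear operator $E:L^p(\sigma)\to L^p(H\chi_{B(0,R)}\,dx)$ at fixed $H$---is in fact correct (the powers of $A_\alpha(H)$ and $R^{\alpha/2}$ line up exactly). The paper does this interpolation differently: it builds a parameter $\theta=2\beta-1$ directly into the proof of the bilinear estimate through a ``localization of the weight'' argument (described heuristically at the end of \S5, carried out in \S7.1), which produces the power $A_\alpha(H)^\beta$ for every $\beta$ at once and makes the argument self-contained.

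However, there are two genuine gaps. First, the $\beta=1$ endpoint is \emph{not} trivial. Your suggested argument (``$|Ef|^2$ has Fourier support in a ball of radius $2$'') gives only
\[
\int_{B(0,R)}|Ef|^2 H \;\lesssim\; A_\alpha(H)\sum_{\text{unit cubes}}|Ef(c)|^2 \;\sim\; A_\alpha(H)\int_{B(0,R)}|Ef|^2 \;\lesssim\; A_\alpha(H)\,R\,\|f\|_2^2,
\]
which has $R$ rather than $R^{\alpha/2}$, losing a full factor $R^{1-\alpha/2}$. The correct $\beta=1$ estimate is exactly the $n=2$ case of Du--Zhang (Theorem~\ref{highfracdim}); it is known, but not by the argument you sketch. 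The paper's proof at $\theta=1$ in fact reproves it.

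Second, your broad/narrow dichotomy is inconsistent, and the resulting numerology does not close. If you cut into $R^{-1/2}$-arcs, the complement of ``a single $R^{-1/2}$-arc dominates'' is ``two arcs with separation $\gtrsim R^{-1/2}$ contribute'', not separation $\gtrsim 1$; and with $\sim R^{1/2}$ arcs you pay a factor $R^{O(1)}$ summing over bilinear pairs, which no bilinear gain can absorb (your claimed bilinear bound $A_\alpha(H)^{1/2}R^{\alpha/4}$ is also wrong---Cauchy--Schwarz against $\|H\chi_{B(0,R)}\|_2$ and the $L^2$ bilinear estimate for transverse arcs give $A_\alpha(H)^{1/2}R^{\alpha/2}$). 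The paper avoids all this by inducting on the \emph{size of the support} $\delta$ at fixed $R$, not on the scale: at each step one cuts a $K\delta$-arc (with $K=R^\epsilon$) into $K$ subarcs of length $\delta$, so there are only $K^2=R^{2\epsilon}$ bilinear pairs; broad uses the bilinear estimate for arcs of size $\delta$ at separation $\sim\delta$, narrow is the inductive hypothesis on a $\delta$-arc, and the base case $\delta=R^{-1/2}$ is a separate linear estimate (\S7.2). There is no parabolic rescaling. The paper's $L^1$-bilinear reads $\int|Ef\,Eg|\,H\lesssim R^\epsilon A_\alpha(H)^\beta\,\delta^{(2-\alpha)(\beta-1)}\,R^{\alpha/2}\,\|f\|_2\|g\|_2$, and the $\delta$-loss is cancelled exactly when passing from $L^2$ to $L^p$ norms using the support constraint (the calculation in \S6.4).
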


The proof of Theorem \ref{mainjj} will use ideas from \cite{tw:csoipaper},
\cite{mbe:fractal}, \cite{plms12046}, and \cite{dz:schrodinger3}. The 
overarching idea, however, is the broad-narrow strategy of 
\cite{bg:bgmethod}. Implementing this strategy involves
\begin{itemize} 
\item proving a bilinear estimate (see (\ref{bilinearest}) in Subsection 7.1 
      below) that will be used to control $Ef$ on the broad set
\item proving a linear estimate (see (\ref{baselinear}) in Subsection 7.2 
      below) that will be used to establish (\ref{mainest}) when the 
	  function $f$ is supported on an arc of small size (i.e.\ 
	  $\sigma$-measure), which will provide the base of an induction 
	  argument
\item carrying out an induction on the size of the function's support 
      argument that will establish (\ref{mainest}) for general $f$. 	  
\end{itemize}

The main new idea in the proof of Theorem \ref{mainjj} is a localization of 
the weight argument that will help us in deriving the bilinear estimate
(\ref{bilinearest}). We use this argument to take advantage of the locally
constant property of the Fourier transform, and we will end this section by
describing the intuition that lies behind it.

Suppose $R > K^2 \geq 1$, $Q$ is a box in $\mbb R^2$ of dimensions 
$R/K \times R$ (boxes of such dimensions are a common feature in this 
context; see \cite[Subsection 3.2]{dz:schrodinger3} and Subsection 6.2 
below), $f$ is a non-negative function supported in $Q$ that is essentially 
constant at scale $K$, and we are seeking an estimate of the form
\begin{displaymath}
\int_Q f(x) H(x) dx 
\lct K^{-m} A_\alpha(H)^\beta R^{\alpha/2} \| f \|_{L^2(Q)}
\end{displaymath}
for some $m \geq 0$, where $\beta$ is as in the statement of Theorem 
\ref{mainjj}. 

We tile $\mbb R^2$ by cubes $\widetilde{B}_l$ of center $c_l$ and 
side-length $K$, and write
\begin{eqnarray*}
\lefteqn{\int_Q f(x) H(x) dx = \sum_l \int_{\widetilde{B}_l} f(x) H(x) dx 
         \sim \sum_l f(c_l) \int_{\widetilde{B}_l} H(x) dx} \\ 
& & = \sum_l K^{-2} \int_{\widetilde{B}_l} f(c_l) H'(y) dy
    \sim K^{-2} \int_Q f(y) H'(y) dy
\end{eqnarray*}
(recall that $f$ is zero outside $Q$), where $H': \mbb R^2 \to [0,\infty)$ 
is given by
\begin{displaymath}
H'(y) = \int_{\widetilde{B}_l} H(x) dx 
\hspace{0.25in} \mbox{ for } \hspace{0.25in} y \in \widetilde{B}_l.
\end{displaymath}
For $y \in \widetilde{B}_l$, we have
\begin{eqnarray*}
H'(y) 
&  =   & \Big( \int_{\widetilde{B}_l} H(x) dx \Big)^{1-\theta} 
         \Big( \int_{\widetilde{B}_l} H(x) dx \Big)^\theta \\
& \leq & K^{2(1-\theta)} A_\alpha(H)^\theta (\sqrt{2} K)^{\alpha \theta},
\end{eqnarray*}
where $0 \leq \theta \leq 1$ is a parameter that will be determined later in
the argument.

Next, we define the function ${\mathcal H} : \mbb R^2 \to [0,1]$ by
\begin{displaymath}
{\mathcal H}(y) = 2^{-\alpha \theta/2} A_\alpha(H)^{-\theta} 
                  K^{-2(1-\theta)-\alpha \theta} H'(y) 
\end{displaymath}
and observe that
\begin{eqnarray*}
\lefteqn{\int_{B(x_0,r)} {\mathcal H}(y) dy \leq K^2 A_\alpha(H)^{-\theta} 
         K^{-2(1-\theta)-\alpha \theta} \int_{B(x_0,2r)} H'(y) dy} \\
& & \leq K^2 A_\alpha(H)^{-\theta} K^{-2(1-\theta)-\alpha \theta} 
         A_\alpha(H) r^\alpha
    = A_\alpha(H)^{1-\theta} K^{\theta(2-\alpha)} r^\alpha
\end{eqnarray*}
for all $x_0 \in \mbb R^2$ and $R \geq 1$, which means that ${\mathcal H}$ 
is a weight on $\mbb R^2$ of fractal dimension $\alpha$ with
\begin{displaymath}
A_\alpha({\mathcal H}) 
\lct A_\alpha(H)^{1-\theta} K^{\theta(2-\alpha)}.
\end{displaymath} 

Going back to our integral, we now have
\begin{displaymath}
\int_Q f(x) H(x) dx 
\sim A_\alpha(H)^\theta K^{\theta(\alpha-2)} 
     \int_Q f(y) {\mathcal H}(y) dy.
\end{displaymath}
Bounding the integral on the right-hand side by Cauchy-Schwarz, this becomes
\begin{displaymath}
\int_Q f(x) H(x) dx \lct 
A_\alpha(H)^\theta K^{\theta(\alpha-2)} 
\Big( \int_Q {\mathcal H}(y) dy \Big)^{1/2} \| f \|_{L^2(Q)}.
\end{displaymath}
But $Q$ can be covered by $\sim K$ balls of radius $R/K$, so
\begin{eqnarray}
\label{startohold}
\int_Q {\mathcal H}(y) dy 
& \lct & K A_\alpha({\mathcal H}) (K^{-1} R)^\alpha \\
& \lct & A_\alpha(H)^{1-\theta} K^{\theta(2-\alpha)} (K^{-1} R)^\alpha,
         \nonumber
\end{eqnarray}
and so
\begin{displaymath}
\int_{B(0,R)} f(x) H(x) dx \lct 
A_\alpha(H)^{(1+\theta)/2} K^{\theta(\alpha-2)/2} (K^{-1} R)^{\alpha/2} 
\| f \|_{L^2(B(0,R)}.
\end{displaymath}
We now determine $\theta$ by solving the equation $(1+\theta)/2=\beta$, 
which gives $\theta=2\beta-1$, and we arrive at
\begin{displaymath}
\int_{B(0,R)} f(x) H(x) dx 
\lct K^{-m} A_\alpha(H)^\beta R^{\alpha/2} \| f \|_{L^2(B(0,R)}
\end{displaymath}
with $m=\beta+(1-\beta)(\alpha-1)$.

\section{Preliminaries for the proof of Theorem \ref{mainjj}}

This section contains basic facts that we need to prove Theorem \ref{mainjj} 
that we include to make the paper as self-contained as possible.

\subsection{The $L^1$ norm of a rapidly decaying function over a box}

In the rigorous version of the localization argument that we described in 
the previous section, instead of integrating over a proper $R/K \times R$ 
box, we will be integrating against a Schwartz function that is essentially
supported on such a box. It is easy to see that (\ref{startohold}) continues 
to be true in this case. Here are the details. 

Suppose $R_1, \ldots, R_n > 0$ and $\Psi$ is a non-negative Schwartz 
function. For $l= 0, 1, 2, \ldots$, we let $\chi_l$ be the characteristic 
function of the box in $\mbb R^n$ of center $0$ and dimensions 
$2^{l+1} R_1 \times \ldots \times 2^{l+1} R_n$, and $B_l=B(0,2^l)$. Then
\begin{eqnarray*}
\lefteqn{\Psi \Big( \frac{x_1-\nu_1}{R_1}, \ldots, 
         \frac{x_n-\nu_n}{R_n} \Big)} \\
& \leq & \Big( \sup_{B_0} \Psi \Big)
         \chi_{B_0} \Big( \frac{x_1-\nu_1}{R_1}, \ldots, 
		 \frac{x_n-\nu_n}{R_n} \Big) \\
&      & + \sum_{l=1}^\infty 
		   \Big( \sup_{B_l \setminus B_{l-1}} \Psi \Big)
           \chi_{B_l \setminus B_{l-1}} 
		   \Big( \frac{x_1-\nu_1}{R_1}, \ldots, \frac{x_n-\nu_n}{R_n} 
		   \Big) \\
& \lct & \sum_{l=0}^\infty 2^{-Nl} \chi_l(x-\nu)
\end{eqnarray*}
for all $x, \nu \in \mbb R^n$ and $N \in \mbb N$, so that
\begin{displaymath}
\int \Psi \Big( \frac{x_1-\nu_1}{R_1}, \ldots, 
         \frac{x_n-\nu_n}{R_n} \Big) H(x) dx 
\lct \sum_{l=0}^\infty 2^{-Nl} \int_{P_l} H(x) dx, 
\end{displaymath}
where $P_l$ is the box in $\mbb R^n$ of center $\nu$ and dimensions 
$2^{l+1} R_1 \times \ldots \times 2^{l+1} R_n$.

In the special case $R_1= \ldots = R_{n-1} = R/K$ and $R_n=R$ with 
$R \geq K^2 \geq 1$ (as in (\ref{startohold})), this gives
\begin{equation}
\label{contohold}
\int \Psi \Big( \frac{x_1-\nu_1}{R K^{-1}}, \ldots, 
                \frac{x_{n-1}-\nu_{n-1}}{R K^{-1}}
                \frac{x_n-\nu_n}{R} \Big) H(x) dx 
\lct K A_\alpha(H) (K^{-1} R)^\alpha
\end{equation}
for all weights $H$ on $\mbb R^n$ of fractal dimension $\alpha$.

\subsection{A property of $R/K \times \cdots \times R/K \times R$ boxes}

Suppose $R \geq K^2 \geq 1$, $Q$ is an 
$R/K \times \cdots \times R/K \times R$ box in $\mbb R^n$, and $Q^*$ is one
of the dual boxes of $Q$ that is tangent to the unit sphere 
$\mbb S^{n-1} \subset \mbb R^n$. 

Let $\delta = K^{-1}$. Then $Q^*$ has dimensions
$(R \delta)^{-1} \times \ldots \times (R \delta)^{-1} \times R^{-1}$ and its
$(R \delta)^{-1} \times \ldots \times (R \delta)^{-1}$ is tangent to 
$\mbb S^{n-1}$ at some point $e$. It is to be proved that $Q^*$ lies in the
$R^{-1}$-neighborhood of $\mbb S^{n-1}$.

Without any loss of generality, we may assume that $e=(0, \ldots, 0, 1)$.

Suppose $y \in Q^*$. Then
\begin{displaymath}
|y|^2= y_1^2 + \ldots + y_{n-1}^2 + (y_n-1+1)^2
= y_1^2 + \ldots + y_{n-1}^2 + (y_n-1)^2 + 2(y_n-1) +1
\end{displaymath}
so that
\begin{displaymath}
\big| |y|^2-1 \big| \leq y_1^2 + \ldots + y_{n-1}^2 + |y_n-1|^2 + 2 |y_n-1|
\end{displaymath}
so that
\begin{displaymath}
\big| |y|-1 \big| \, \big| |y|+1 \big|
\leq y_1^2 + \ldots + y_{n-1}^2 + 3 |y_n-1|
\end{displaymath}
so that
\begin{displaymath}
\big| |y|-1 \big| \leq y_1^2 + \ldots + y_{n-1}^2 + 3 |y_n-1|
\leq \frac{n-1}{(R \delta)^2} + \frac{3}{R} \lct \frac{1}{R},
\end{displaymath}
where we have used the fact that
\begin{displaymath}
\frac{1}{(R \delta)^2} = \frac{1}{R} \frac{K^2}{R} \leq \frac{1}{R}.
\end{displaymath}

\subsection{The Kakeya information underlying the bilinear estimate}

Suppose $\delta > 0$, $R \geq \delta^{-1}$, and $J_1$ and $J_2$ are subsets
of the circular arc $\{ e^{i\theta} : \pi/4 \leq \theta \leq 3\pi/4 \}$ such 
that $\mbox{Dist}(J_1,J_2) \geq 3\delta$.

Let $N_1$ and $N_2$ be the $R^{-1}$-neighborhoods of $J_1$ and $J_2$,
respectively. In this subsection, we derive a bound on the Lebesgue measure
of the set $(x+N_1) \cap N_2$ for $x \in \mbb R^2$.

Since we are interested in the $L^\infty$-norm of the function
\begin{displaymath}
x \longmapsto \int \chi_{x+N_1}(y) \chi_{N_2}(y) dy,
\end{displaymath}
we let $h \in L^1(\mbb R^2)$ be a non-negative function and consider the 
integral
\begin{displaymath}
I= \int \int \chi_{x+N_1}(y) \chi_{N_2}(y) dy h(x) dx.
\end{displaymath}

Writing
\begin{displaymath}
I = \int \int \chi_{N_1}(y-x) \chi_{N_2}(y) h(x) dy dx
  = \int \chi_{N_2}(y) \int \chi_{N_1}(y-x) h(x) dx dy,
\end{displaymath}
and applying the change of variables $u=y-x$ in the inner integral, we see 
that
\begin{displaymath}
I= \int \chi_{N_2}(y) \int \chi_{N_1}(u) h(y-u) du dy
= \int_{N_2} \int_{N_1} h(y-u) du dy.
\end{displaymath}
Changing into polar coordinates, this becomes
\begin{displaymath}
I= \int_{1-R^{-1}}^{1+R^{-1}} \int_{1-R^{-1}}^{1+R^{-1}} \int_{\tilde{J_1}}
\int_{\tilde{J_2}} h(re^{i\theta}-se^{i\varphi}) r s d\theta d\varphi dr ds,
\end{displaymath}
where $\tilde{J_1}=N_1 \cap \mbb S^1$ and $\tilde{J_2}=N_2 \cap \mbb S^1$.

We define
\begin{displaymath}
T(\theta,\varphi) = re^{i\theta}-se^{i\varphi}
= (r \cos \theta - s \cos \varphi, r \sin \theta - s \sin \varphi).
\end{displaymath}
The Jacobian of this transformation is
\begin{displaymath}
J_T(\theta,\varphi) =
\left| \begin{array}{cc}
       -r \sin \theta & s \sin \varphi \\
       r \cos \theta  & -s \cos \varphi
       \end{array} \right| = r s \sin(\theta-\varphi).
\end{displaymath}
So
\begin{displaymath}
\int_{\tilde{J_1}} \int_{\tilde{J_2}} r s h(re^{i\theta}-se^{i\varphi})
d\theta d\varphi
= \int_{\tilde{J_1} \times \tilde{J_2}}
\frac{h(T(\theta,\varphi)) |J_T|}{|\sin(\theta-\varphi)|} d(\theta,\varphi).
\end{displaymath}
But $|\theta-\varphi| \leq \pi/2$, so
\begin{displaymath}
|\sin(\theta-\varphi)| \geq \frac{2}{\pi} |\theta-\varphi|
\geq \frac{2}{\pi} \mbox{ Dist}(\tilde{J_1},\tilde{J_2})
\geq \frac{2\delta}{\pi},
\end{displaymath}
and so
\begin{eqnarray*}
\int_{\tilde{J_1}} \int_{\tilde{J_2}} r s h(re^{i\theta}-se^{i\varphi})
d\theta d\varphi
& \leq & \frac{\pi}{2\delta} \int_{\tilde{J_1} \times \tilde{J_2}}
        h \circ T(\theta,\varphi) |J_T(\theta,\varphi)| d(\theta,\varphi) \\
&  =   & \frac{\pi}{2\delta} \int_X h(x,y) d(x,y) 
         \; \leq \; \frac{\pi}{2\delta} \| h \|_{L^1}.
\end{eqnarray*}
Thus
\begin{displaymath}
I \leq \int_{1-R^{-1}}^{1+R^{-1}} \int_{1-R^{-1}}^{1+R^{-1}}
\frac{\pi}{2\delta} \| h \|_{L^1} dr ds
= \frac{\pi}{2\delta R^2} \| h \|_{L^1}.
\end{displaymath}
Therefore, by duality,
\begin{equation}
\label{specialD2}
|(x+N_1) \cap N_2| \leq \frac{\pi}{2 R^2 \delta}
\end{equation}
for a.e.\ $x \in \mbb R^2$.

\subsection{Calculation giving the right exponent for the restriction 
            estimate}

(The reader is advised to skip this section until we refer back to it in 
Subsections 7.1 and 7.2.)
 
Suppose $0 < \delta \leq 1$, $1 \leq \alpha \leq n$, 
$1/n \leq \beta \leq 2/n$, $\sigma$ is induced Lebesgue measure on the unit
sphere $\mbb S^{n-1} \subset \mbb R^n$, and $f, g \in L^1(\sigma)$ are 
functions satisfying 
$\sigma(\mbox{supp} \, f), \sigma(\mbox{supp} \, g) \leq \delta^{n-1}$. We 
are looking for an exponent $p \geq 2$ so that
\begin{equation}
\label{rightexplinear}
\| f \|_{L^1(\sigma)}^{p-2} \| f \|_{L^2(\sigma)}^2 
\leq \delta^{(n-\alpha)((2/n)-\beta)} \| f \|_{L^p(\sigma)}^p
\end{equation}
and
\begin{equation}
\label{rightexpbilinear}
\| f \|_{L^1(\sigma)}^{(p/2)-1} \| f \|_{L^2(\sigma)} 
\| g \|_{L^1(\sigma)}^{(p/2)-1} \| g \|_{L^2(\sigma)}
\leq \delta^{(n-\alpha)((2/n)-\beta)}
     \| f \|_{L^p(\sigma)}^{p/2} \| g \|_{L^2(\sigma)}^{p/2}.
\end{equation}

We have
\begin{displaymath}
\| f \|_{L^1(\sigma)} 
\leq \sigma(\mbox{supp} \, f)^{1-(1/p)} \| f \|_{L^p(\sigma)}
\leq \delta^{(n-1)(p-1)/p} \| f \|_{L^p(\sigma)}
\end{displaymath}
and
\begin{displaymath}
\| f \|_{L^2(\sigma)}^2 \leq \sigma(\mbox{supp} \, f)^{1-(2/p)} 
\Big( \int |f|^{2(p/2)} d\sigma \Big)^{2/p}
\leq \delta^{(n-1)(p-2)/p} \| f \|_{L^p(\sigma)}^2,	
\end{displaymath}
so
\begin{eqnarray*}
\| f \|_{L^1(\sigma)}^{p-2} \| f \|_{L^2(\sigma)}^2 
& \leq & \delta^{(n-1)(p-2)(p-1)/p)} \| f \|_{L^p(\sigma)}^{p-2}
         \delta^{(n-1)(p-2)/p} \| f \|_{L^p(\sigma)}^2 \\
&  =   & \delta^{(n-1)(p-2)} \| f \|_{L^p(\sigma)}^p,	
\end{eqnarray*}
so $(n-1)(p-2) = (n-\alpha)((2/n)-\beta)$, and so 
\begin{displaymath}
p = 2 + \frac{n-\alpha}{n-1} \Big( \frac{2}{n} - \beta \Big).
\end{displaymath}

Therefore, (\ref{rightexplinear}) holds with the above value of $p$. Using 
(\ref{rightexplinear}), we now have
\begin{eqnarray*}
\lefteqn{\| f \|_{L^1(\sigma)}^{(p-2)/2} \| f \|_{L^2(\sigma)} 
         \| g \|_{L^1(\sigma)}^{(p-2)/2} \| g \|_{L^2(\sigma)}} \\
& \leq & \Big( \delta^{(n-\alpha)((2/n)-\beta)} \| f \|_{L^p(\sigma)}^p 
         \Big)^{1/2}
         \Big( \delta^{(n-\alpha)((2/n)-\beta)} \| g \|_{L^p(\sigma)}^p 
		 \Big)^{1/2},
\end{eqnarray*}
which is the inequality in (\ref{rightexpbilinear}).

\section{Proof of Theorem \ref{mainjj}}

As the paragraph following the statement of Theorem \ref{mainjj} says, our
proof of this theorem relies on ideas from \cite{tw:csoipaper}, 
\cite{mbe:fractal}, \cite{bg:bgmethod}, \cite{plms12046}, and 
\cite{dz:schrodinger3}. 

\subsection{The bilinear estimate}

Following \cite[pages 1281--1283]{bg:bgmethod}, we write the ball $B(0,R)$ 
as a disjoint union of two sets, one {\it broad}, the other {\it narrow} 
(see Subsection 7.3 below for the definition of these two sets). To estimate 
the $L^p(H dx)$-norm of $Ef$ on the broad set, we consider a bilinear 
estimate.

For the rest of the paper, we will use the following notation. If $\phi$ is
a function on $\mbb R^2$ and $\rho > 0$, then $\phi_\rho$ is the function 
given by $\phi_\rho(\cdot) = \rho^{-2} \phi(\rho^{-1} \cdot)$.

Suppose $f$ is supported in an arc $I$ and $g$ is supported in an arc $J$ 
with $\sigma(I) \sim \sigma(J) \sim \delta$ and
$\delta \leq \mbox{ Dist}(I,J) \leq R^\epsilon \delta$. In this subsection, 
we shall always assume that 
\begin{equation}
\label{condondelta}
(10) R^\epsilon \leq \frac{1}{\delta} \leq \frac{R \delta}{10}.
\end{equation}

Let $\eta$ be a $C_0^\infty$ function on $\mbb R^2$ satisfying
$|\widehat{\eta}| \geq 1$ on $B(0,1)$. Then
\begin{eqnarray*}
\lefteqn{\int_{B(0,R)} |Ef(x) Eg(x)| H(x) dx \; = \;
\int_{B(0,R)} |\widehat{fd\sigma}(x) \widehat{gd\sigma}(x)| H(x) dx} \\
& \leq & \int_{B(0,R)} |\widehat{fd\sigma}(x) \widehat{gd\sigma}(x)| \,
         |\widehat{\eta}(x/R)|^2 H(x) dx \\
&  =   & \int_{B(0,R)} |\big( \eta_{R^{-1}} \ast fd\sigma\widehat{\big)}(x)
         \big( \eta_{R^{-1}} \ast gd\sigma \widehat{\big)}(x)| H(x) dx \\
&  =   & \int_{B(0,R)} |\widehat{F}(x) \widehat{G}(x)| H(x) dx,
\end{eqnarray*}
where $F=\eta_{R^{-1}} \ast fd\sigma$ and $G=\beta_{R^{-1}} \ast gd\sigma$.

Applying the Cauchy-Schwarz inequality in the convolution integral with
respect to the measure $|\eta_{R^{-1}}(\xi-\cdot)|d\sigma$, we see that
\begin{eqnarray*}
\| F \|_{L^2}^2
& \leq & \!\! \int \Big( \int |f(\theta)|^2 \, |\eta_{R^{-1}}(\xi-\theta)|
                         d\sigma(\theta) \Big)
                   \Big( \int |\eta_{R^{-1}}(\xi-\theta)|
                         d\sigma(\theta) \Big) d\xi \\
& \lct & \!\! R \int \int |f(\theta)|^2 \,
              |\eta_{R^{-1}}(\xi-\theta)| d\sigma(\theta) d\xi \\
&   =  & \!\! R \int |f(\theta)|^2 \int |\eta_{R^{-1}}(\xi-\theta)| d\xi
              d\sigma(\theta)
         \; = \; R \, \| \eta \|_{L^1} \, \| f \|_{L^2(\sigma)}^2,
\end{eqnarray*}
where in the second inequality we used the fact that
\begin{displaymath}
\int |\eta_{R^{-1}}(\xi-\theta)|d\sigma(\theta) 
\lct R^2 \sigma(B(\xi,R^{-1}) \lct R.
\end{displaymath}
Therefore,
\begin{equation}
\label{normFG}
\| F \|_{L^2} \lct R^{1/2}  \| f \|_{L^2(\sigma)}
\hspace{0.25in} \mbox{ and } \hspace{0.25in}
\| G \|_{L^2} \lct R^{1/2}  \| g \|_{L^2(\sigma)}.
\end{equation}

Since $F$ is supported in the $R^{-1}$-neighborhood of $I$ and $G$ is 
supported in the $R^{-1}$-neighborhood of $J$, we see (via 
(\ref{condondelta})) that $F$ is supported in a ball of radius 
$(\delta/2) + (\delta/10) = (3\delta/5)$ and similarly for $G$. So 
$F \ast G$ is supported in a ball of radius $(6\delta/5)$, say
$B(\xi_0,(6\delta/5))$. Via the locally constant property of the Fourier
transform, this fact tells us that the Fourier transform of $F \ast G$ is
essentially constant at scale $K=\delta^{-1}$, and hence allows us to
implement the localization of the weight argument that we described in 
Section 5 at the intuitive level, and which we now carry out rigorously.

Let $\phi$ be a Schwartz function which is equal to 1 on $B(0,6/5)$. Then 
$\phi_\delta(\xi-\xi_0)=\delta^{-2}$ on $B(\xi_0,\frac{6\delta}{5})$, so 
that
\begin{displaymath}
F \ast G = \delta^2 \phi_\delta(\cdot - \xi_0) \big( F \ast G \big)
\end{displaymath}
and
\begin{displaymath}
\widehat{F}(x) \widehat{G}(x) = \delta^2
\Big( \phi_\delta(\cdot - \xi_0) \big( F \ast G \big) \widehat{\Big)}(x)
= \delta^2
\big( \phi_\delta(\cdot - \xi_0) \widehat{\big)} \ast \widehat{F \ast G}(x).
\end{displaymath}
Since $\big( \phi_\delta(\cdot - \xi_0) \widehat{\big)}(x)=
e^{-2\pi i x \cdot \xi_0} \widehat{\phi}(\delta x)$, it follows that
\begin{eqnarray*}
\widehat{F}(x) \widehat{G}(x)
& = & \delta^2 \int \big( \phi_\delta(\cdot - \xi_0) \widehat{\big)}(x-y)
      \widehat{F \ast G}(y) dy \\
& = & \delta^2 \int e^{-2\pi i (x-y) \cdot \xi_0}
      \widehat{\phi}(\delta(x-y)) \widehat{F \ast G}(y) dy,
\end{eqnarray*}
so that
\begin{displaymath}
|\widehat{F}(x) \widehat{G}(x)| \leq \delta^2
\int |\widehat{\phi}(\delta(x-y))| \, |\widehat{F \ast G}(y)| dy.
\end{displaymath}
Therefore,
\begin{equation}
\label{star1}
\int_{B(0,R)} \!\! |Ef(x) Eg(x)| H(x) dx
\leq \delta^2 \int |\widehat{F \ast G}(y)| \!\!
     \int |\widehat{\phi}(\delta(x-y))| H(x) dx dy.		 
\end{equation}

For $l = 0, 1, 2, \ldots$, we let $B_l=B(y,2^l \delta^{-1})$ and write
\begin{eqnarray*}
\lefteqn{\int |\widehat{\phi}(\delta(x-y))| H(x) dx} \\
& = & \int_{B_0} |\widehat{\phi}(\delta(x-y))| H(x) dx + \sum_{l=1}^\infty
      \int_{B_l \setminus B_{l-1}} |\widehat{\phi}(\delta(x-y))| H(x) dx \\
& \leq & \int_{B_0} \frac{C_N H(x)}{(1+\delta|x-y|)^N} dx
         + \sum_{l=1}^\infty \int_{B_l \setminus B_{l-1}}
         \frac{C_N H(x)}{(1+\delta|x-y|)^N} dx \\
& \leq & C_N \int_{B_0} H(x) dx + \sum_{l=1}^\infty 
        \frac{C_N}{\big( 1+\delta \frac{2^{l-1}}{\delta} \big)^N} 
		\int_{B_l} H(x) dx. 
\end{eqnarray*}
We now let $0 \leq \theta \leq 1$ be a parameter that will be determined 
later and write
\begin{eqnarray*}
\int_{B_l} H(x) dx
& = & \Big( \int_{B_l} H(x) dx \Big)^{1-\theta} 
      \Big( \int_{B_l} H(x) dx \Big)^\theta \\
& \leq & |B_l|^{1-\theta} \Big( A_\alpha(H) 
         \Big( \frac{2^l}{\delta} \Big)^\alpha \Big)^\theta \\
& \leq & C_\theta \Big( \frac{2^l}{\delta} \Big)^{2(1-\theta)+\alpha \theta}
         A_\alpha(H)^\theta,
\end{eqnarray*}
where we have used the fact that $1/\delta \geq 1$, and we obtain
\begin{eqnarray*}
\lefteqn{\int |\widehat{\phi}(\delta(x-y))| H(x) dx} \\
& \leq & C_{N,\theta} \Big( \frac{1}{\delta} 
         \Big)^{n(1-\theta)+\alpha \theta} A_\alpha(H)^\theta
         + \sum_{l=1}^\infty \frac{C_N}{(1+2^{l-1})^N}
		   \Big( \frac{2^l}{\delta} \Big)^{2(1-\theta)+\alpha \theta}
		            A_\alpha(H)^\theta \\
& \leq & C_{N,\theta} A_\alpha(H)^\theta 
         \Big( \frac{1}{\delta} \Big)^{2(1-\theta)+\alpha \theta}.
\end{eqnarray*}
Also,
\begin{displaymath}
\int_{B(x_0,r)} \int |\widehat{\phi}(\delta(x-y))| H(x) dx dy = \int \int
\chi_{B(x_0,r)}(y) |\widehat{\phi}(\delta(x-y))| dy H(x) dx.
\end{displaymath}
Applying the change of variables $z=\delta(x-y)$ in the inner integral, we
get
\begin{eqnarray*}
\int_{B(x_0,r)} \int |\widehat{\phi}(\delta(x-y))| H(x) dx dy \!\!
& = & \!\! \frac{1}{\delta^2} \int \int \chi_{B(x_0,r)} 
      \big( x-\frac{z}{\delta} \big) |\widehat{\phi}(z)| dz H(x) dx \\
& = &  \!\! \frac{1}{\delta^2} \int |\widehat{\phi}(z)|
      \int \chi_{B(x_0,r)} \big( x-\frac{z}{\delta} \big) H(x) dx dz.
\end{eqnarray*}
But
\begin{displaymath}
\int \chi_{B(x_0,r)} \big( x-\frac{z}{\delta} \big) H(x) dx
= \int_{B(x_0+\frac{z}{\delta},r)} H(x) dx \leq A_\alpha(H) r^\alpha
\end{displaymath}
for all $x_0 \in \mbb R^n$ and $r \geq 1$, so
\begin{displaymath}
\int_{B(x_0,r)} \int |\widehat{\phi}(\delta(x-y))| H(x) dx dy \leq
\frac{1}{\delta^2} \, \| \widehat{\phi} \|_{L^1} \, A_\alpha(H) \, r^\alpha
\end{displaymath}
for all $x_0 \in \mbb R^2$ and $r \geq 1$.

For $y \in \mbb R^2$, define
\begin{displaymath}
{\mathcal H}(y)= 
\frac{\delta^{2(1-\theta)+\alpha \theta}}{C_{N,\theta} A_\alpha(H)^\theta}
\int |\widehat{\phi}(\delta(x-y))| H(x) dx.
\end{displaymath}
In view of the above discussion, we have
\begin{displaymath}
\| {\mathcal H} \|_{L^\infty} \leq 1
\hspace{0.5in} \mbox{ and } \hspace{0.5in}
\int_{B(x_0,r)} {\mathcal H}(y) dy 
\leq C A_\alpha^{1-\theta} \delta^{(\alpha-2)\theta} r^\alpha
\end{displaymath}
for all $x_0 \in \mbb R^2$ and $r \geq 1$. Thus ${\mathcal H}$ is a weight
on $\mbb R^2$ of fractal dimension $\alpha$ with
\begin{displaymath}
A_\alpha({\mathcal H}) 
\leq C A_\alpha(H)^{1-\theta} \delta^{(\alpha-2)\theta}.
\end{displaymath}

Going back to (\ref{star1}), we now have
\begin{eqnarray}
\label{goingback}
\int_{B(0,R)} |Ef(x) Eg(x)| H(x) dx 
& \leq & \delta^2 \frac{C_{N,\theta} A_\alpha(H)^\theta}
                       {\delta^{2(1-\theta)+\alpha \theta}} 
	     \int |\widehat{F \ast G}(y)| {\mathcal H}(y) dy \nonumber \\
&  =   & C_{N,\theta} \, \delta^{(2-\alpha)\theta} A_\alpha(H)^\theta
         \int |\widehat{F \ast G}(y)| {\mathcal H}(y) dy.
\end{eqnarray}

Next, we let $Q^*$ be the box in frequency space (where the circle is 
located) of dimensions $(R\delta)^{-1} \times R^{-1}$, centered at the 
origin, and with the $(R\delta)^{-1}$-side (i.e.\ the long side) parallel to
the line segment that connects the midpoint of $I$ to that of $J$. We also 
let $\{ Q_l \}$ be a tiling of $\mbb R^2$ by boxes dual to $Q^*$ (i.e.\ each
$Q_l$ is an $R\delta \times R$ box whose $R\delta$-side is parallel to the
$(R\delta)^{-1}$-side of $Q^*$) with centers $\{ \nu_l \}$, $\psi$ be a 
$C_0^\infty$ function on $\mbb R^2$, and we define
\begin{displaymath}
\psi_l(\xi)= (R\delta) R \; 
             \psi(R\delta \xi_1, R\xi_2) \; e^{2\pi i \nu_l \cdot \xi}.
\end{displaymath}
In the definition of $\psi_l$, we are assuming that the line joining the 
midpoint of $I$ to that of $J$ is horizontal (i.e.\ parallel to the 
$\xi_1$-axis). This assumption makes the presentation a little smoother and,
of course, does not cost us any loss of generality.

We assume further that the Fourier transform of $\psi$ is non-negative and
satisfies $\widehat{\psi} \geq 1/2$ on $[-1/2,1/2] \times [-1/2,1/2]$. Then
\begin{displaymath}
\widehat{\psi_l}(x) = \widehat{\psi} 
\Big( \frac{x_1-\nu_{l,1}}{R\delta}, \frac{x_2-\nu_{l,2}}{R} \Big)
\geq \frac{1}{2}
\hspace{0.25in} \mbox{ if } \hspace{0.25in} x \in Q_l.
\end{displaymath}
By the Schwartz decay of $\widehat{\psi}$, we have
$\sum_{m \in \mbb Z^2} \widehat{\psi}(\cdot - m)^k \lct 1$ for any 
$k \in \mbb N$. Also, $\{ \nu_l \}$ is basically 
$R\delta \mbb Z \times R \mbb Z$, so
\begin{displaymath}
\sum_{l=1}^\infty \widehat{\psi_l}(R\delta x_1, R x_2)^k
= \sum_{l=1}^\infty \widehat{\psi} \Big(
  \frac{R\delta x_1-\nu_{l,1}}{R\delta}, \frac{Rx_2-\nu_{l,2}}{R} \Big)^k
= \sum_{m \in \mbb Z^2} \widehat{\psi}(x - m)^k \lct 1,
\end{displaymath}
and so
\begin{equation}
\label{basicallyzz}
\sum_{l=1}^\infty \widehat{\psi_l}(x)^k \lct 1
\end{equation}
for all $x \in \mbb R^2$.

Going back to (\ref{goingback}), we can now write
\begin{displaymath}
\int_{B(0,R)} |Ef(x) Eg(x)| H(x) dx
\lct \delta^{(2-\alpha)\theta} A_\alpha(H)^\theta \sum_{l=1}^\infty
     \int |\widehat{F}(x) \widehat{G}(x)| \widehat{\psi_l}(x)^3 
	 {\mathcal H}(x) dx.
\end{displaymath}
Letting $F_l=\psi_l \ast F$ and $G_l=\psi_l \ast G$, this becomes
\begin{displaymath}
\int_{B(0,R)} |Ef(x) Eg(x)| H(x) dx
\lct \delta^{(2-\alpha)\theta} A_\alpha(H)^\theta \sum_{l=1}^\infty
     \int |\widehat{F_l}(x) \widehat{G_l}(x)| \widehat{\psi_l}(x) 
	 {\mathcal H}(x) dx.
\end{displaymath}
By Cauchy-Schwarz,
\begin{displaymath}
\int |\widehat{F_l}(x) \widehat{G_l}(x)| \widehat{\psi_l}(x) 
	 {\mathcal H}(x) dx
\leq \| \widehat{F_l} \widehat{G_l} \|_{L^2} 
     \| \widehat{\psi_l}(x) {\mathcal H} \|_{L^2}.
\end{displaymath}
Applying (\ref{contohold}) from Subsection 6.1 with $n=2$ and 
$K = \delta^{-1}$, we have
\begin{eqnarray*}
\lefteqn{\int \widehat{\psi_l}(x)^2 {\mathcal H}(x)^2 dx
\; \lct \; \int \widehat{\psi_l}(x) {\mathcal H}(x) dx
\; \lct \; A_\alpha({\mathcal H}) \frac{R}{R\delta} (R\delta)^\alpha} \\
& & \lct \; A_\alpha(H)^{1-\theta} \delta^{(\alpha-2)\theta} R^\alpha 
         \delta^{\alpha-1}
\; = \; A_\alpha(H)^{1-\theta} \delta^{(\alpha-2)\theta+\alpha-1} R^\alpha,
\end{eqnarray*}
so that
\begin{displaymath}
\| \widehat{\psi_l}(x) {\mathcal H} \|_{L^2} \lct A_\alpha(H)^{(1-\theta)/2} 
\delta^{((\alpha-2)\theta+\alpha-1)/2} R^{\alpha/2}.
\end{displaymath}
Therefore,
\begin{displaymath}
\int_{B(0,R)} |Ef(x) Eg(x)| H(x) dx 
\lct A_\alpha(H)^{(1+\theta)/2} \delta^{((2-\alpha)\theta+\alpha-1)/2} 
     R^{\alpha/2} \sum_{l=1}^\infty \| \widehat{F_l} \widehat{G_l} \|_{L^2}.
\end{displaymath}
Letting $\beta = (1+\theta)/2$ (since $0 \leq \theta \leq 1$, we have
$1/2 \leq \beta \leq 1$), this becomes
\begin{displaymath}
\int_{B(0,R)} |Ef(x) Eg(x)| H(x) dx
\lct A_\alpha(H)^\beta \delta^{(2-\alpha)\beta+\alpha-(3/2)} R^{\alpha/2}
     \sum_{l=1}^\infty \| \widehat{F_l} \widehat{G_l} \|_{L^2}.	
\end{displaymath}

We now let $A_l$ be the support of $F_l$, $B_l$ be the support of $G_l$, and
define the function $\lambda_l : \mbb R^2 \to [0,\infty)$ by
$\lambda_l(\xi)= |(\xi-A_l) \cap B_l|$. Applying Plancherel's theorem 
followed by Cauchy-Schwarz, we see that
\begin{displaymath}
\| \widehat{F_l} \widehat{G_l} \|_{L^2}^2 = \int |F_l \ast G_l(\xi)|^2 d\xi
\leq \| \lambda_l \|_{L^\infty} \int |F_l|^2 \ast |G_l|^2(\xi) d\xi.
\end{displaymath} 
By Young's inequality,
\begin{displaymath}
\int |F_l|^2 \ast |G_l|^2(\xi) d\xi 
\leq \| |F_l|^2 \|_{L^1} \| |G_l|^2 \|_{L^1}
= \| F_l \|_{L^2}^2 \| G_l \|_{L^2}^2,
\end{displaymath}
so the only problem is to estimate $\| \lambda_l \|_{L^\infty}$. We will do
this by using the Kakeya bound (\ref{specialD2}) of Subsection 6.3. 

Our assumptions on the arcs $I$ and $J$ imply that the angle between any two
points in $I \cup J$ is \!\!\!\!\! $\lct R^\epsilon \delta$. Also, for each 
$l$, the function $\psi_l$ is supported in the 
$(R\delta)^{-1} \times R^{-1}$ box $Q^*$ of center $(0,0)$ and with the long 
side parallel to the line joining the midpoints of $I$ and $J$. So, if 
$e \in I \cup J$, then the translate $Q^*+e$ of $Q^*$ is contained in an 
$(R \delta)^{-1} \times R^{\epsilon-1}$ box with the $(R \delta)^{-1}$-side 
tangent to $\mbb S^1$ at $e$. Therefore, the property of boxes of this form 
that was presented in Subsection 6.2 tells us that $Q^*+e$ is contained in 
the $R^{\epsilon-1}$-neighborhood of $\mbb S^1$. Therefore, the sets $A_l$ 
and $B_l$ satisfy the requirements needed for us to apply (\ref{specialD2}) 
and conclude 
\begin{displaymath}
\| \lambda_l \|_{L^\infty} \lct \frac{R^\epsilon}{R^2\delta}.
\end{displaymath}

Putting together what we have proved in the previous two paragraphs, we 
obtain
\begin{displaymath}
\| \widehat{F_l} \widehat{G_l} \|_{L^2}^2 
\lct \frac{R^\epsilon}{R^2\delta} \, \| F_l \|_{L^2}^2 \| G_l \|_{L^2}^2,
\end{displaymath}
and hence
\begin{eqnarray*}
\lefteqn{\int_{B(0,R)} |Ef(x) Eg(x)| H(x) dx} \\
& \lct & R^\epsilon A_\alpha(H)^\beta 
         \delta^{(2-\alpha)\beta+\alpha-(3/2)} 
         \frac{R^{\alpha/2}}{(R^2 \delta)^{1/2}} 
		 \sum_{l=1}^\infty \| F_l \|_{L^2} \| G_l \|_{L^2} \\
&  =  & R^\epsilon A_\alpha(H)^\beta 
         \delta^{(2-\alpha)(\beta-1)} \frac{R^{\alpha/2}}{R} 
		 \sum_{l=1}^\infty \| F_l \|_{L^2} \| G_l \|_{L^2}.
\end{eqnarray*}

By Cauchy-Schwarz and Plancherel,
\begin{displaymath}
\sum_{l=1}^\infty \| F_l \|_{L^2} \| G_l \|_{L^2}
\leq \Big( \sum_{l=1}^\infty \| \widehat{F_l} \|_{L^2}^2 \Big)^{1/2}
     \Big( \sum_{l=1}^\infty \| \widehat{G_l} \|_{L^2}^2 \Big)^{1/2}.
\end{displaymath}
Also, by (\ref{basicallyzz}),
\begin{displaymath}
\sum_{l=1}^\infty \| \widehat{F_l} \|_{L^2}^2
= \int |\widehat{F}(x)|^2 \sum_{l=1}^\infty \widehat{\psi_l}(x)^2 dx
\lct \| \widehat{F} \|_{L^2}^2 =  \| F \|_{L^2}^2
\end{displaymath}
and similarly for $\sum_{l=1}^\infty \| \widehat{G_l} \|_{L^2}^2$, so
\begin{displaymath}
\int_{B(0,R)} |Ef(x) Eg(x)| H(x) dx
\lct R^\epsilon A_\alpha(H)^\beta \delta^{(2-\alpha)(\beta-1)} 
     \frac{R^{\alpha/2}}{R} \| F \|_{L^2} \| G \|_{L^2}.
\end{displaymath}
Recalling (\ref{normFG}), our bilinear estimate becomes
\begin{displaymath}
\int_{B(0,R)} |Ef(x) Eg(x)| H(x) dx
\lct R^\epsilon A_\alpha(H)^\beta \delta^{(2-\alpha)(\beta-1)} R^{\alpha/2} 
     \| f \|_{L^2(\sigma)} \| g \|_{L^2(\sigma)}.
\end{displaymath}

Writing
\begin{eqnarray*}
\lefteqn{\int_{B(0,R)} |Ef(x) Eg(x)|^{p/2} H(x) dx} \\
&   =  & \int_{B(0,R)} |Ef(x) Eg(x)|^{(p/2)-1} |Ef(x) Eg(x)| H(x) dx \\
& \leq & \| f \|_{L^1(S)}^{(p/2)-1} \| g \|_{L^1(S)}^{(p/2)-1} 
         \int_{B(0,R)} |Ef(x) Eg(x)| H(x) dx \\
& \leq & C_B R^\epsilon A_\alpha(H)^\beta R^{\alpha/2} 
         \delta^{(2-\alpha)(\beta-1)} 
         \| f \|_{L^1(\sigma)}^{(p/2)-1} \| f \|_{L^2(\sigma)} 
		 \| g \|_{L^1(\sigma)}^{(p/2)-1} \| g \|_{L^2(\sigma)}		 
\end{eqnarray*}
and applying (\ref{rightexpbilinear}), we arrive at
\begin{equation}
\label{bilinearest}
\int_{B(0,R)} |Ef(x) Eg(x)|^{p/2} H(x) dx
\leq R^\epsilon C_B A_\alpha(H)^\beta R^{\alpha/2} 
         \| f \|_{L^p(\sigma)}^{p/2} \| g \|_{L^p(\sigma)}^{p/2}.		 
\end{equation}

\subsection{The linear estimate}

In this subsection, we work in $\mbb R^n$ with $n \geq 2$.

Suppose $f$ is supported in a cap of radius $\delta/2$. The purpose of this 
section is to establish the estimate
\begin{equation}
\label{baselinear}
\int_{B(0,R)} |Ef(x)|^p H(x) dx \leq C_L A_\alpha(H)^\beta 
\delta^{-2\alpha/n} (\delta^2 R) \| f \|_{L^p(\sigma)}^p
\end{equation}
under the condition
\begin{equation}
\label{condondeltalinear}
(10) R^\epsilon \leq \frac{1}{\delta} \leq \frac{R}{10}.
\end{equation}

Let $\eta$ be a $C_0^\infty$ function on $\mbb R^n$ satisfying 
$|\widehat{\eta}| \geq 1$ on $B(0,1)$, and $F=\eta_{R^{-1}} \ast f d\sigma$. 
Then
\begin{displaymath}
\int_{B(0,R)} |Ef(x)|^2 H(x)dx \leq \int_{B(0,R)} |\widehat{F}(x)|^2 H(x)dx.
\end{displaymath}
Also, let $\psi$ be a $C_0^\infty$ function on $\mbb R^n$, and $\{ B_l \}$ 
be a tiling of $\mbb R^n$ by balls dual to $B(0,\delta)$ (i.e.\ 
$\delta^{-1}$-balls) with centers $\{ \nu_l \}$, and set
\begin{displaymath}
\psi_l(\xi)=\delta^{-n} \psi(\delta^{-1} \xi) \, e^{2\pi i \nu_l \cdot \xi}.
\end{displaymath}
We assume further that $\widehat{\psi}$ is non-negative and $\geq 1/2$ on 
the unit ball. Then
\begin{displaymath}
\widehat{\psi_l}(x)= \widehat{\psi}(\delta (x-\tau_l)) \geq \frac{1}{2}
\end{displaymath}
if $|\delta (x-\tau_l)| \leq 1$, i.e.\ if $x \in B_l$. Thus
\begin{displaymath}
\int_{B(0,R)} |Ef(x)|^2 H(x)dx \lct \sum_{l=1}^\infty 
\int |\widehat{F}(x) \widehat{\psi_l}(x)|^2 \widehat{\psi_l}(x) H(x) dx.
\end{displaymath}
Since $1/n \leq \beta \leq 2/n$, we can apply H\"{o}lder's inequality
with the dual exponents $1/(1-\beta)$ and $1/\beta$ to get
\begin{displaymath}
\int_{B(0,R)} |Ef(x)|^2 H(x)dx \lct \sum_{l=1}^\infty 
\| \widehat{F \ast \psi_l} \|_{L^{2/(1-\beta)}}^2
\| \widehat{\psi_l} H \|_{L^{1/\beta}}.
\end{displaymath}
Since $\| H \|_{L^\infty} \leq 1$, we have
\begin{displaymath}
\| \widehat{\psi_l} H \|_{L^{1/\beta}}^{1/\beta}
\leq \int \widehat{\psi_l}(x)^{1/\beta} H(x) dx,
\end{displaymath}
and hence (by the proof of (\ref{contohold}))
\begin{displaymath}
\| \widehat{\psi_l} H \|_{L^{1/\beta}}^{1/\beta} 
\lct A_\alpha(H) \Big( \frac{1}{\delta} \Big)^\alpha.
\end{displaymath}
Also, by Hausdorff-Young,
\begin{displaymath}
\| \widehat{F \ast \psi_l} \|_{L^{2/(1-\beta)}} 
\leq \| F \ast \psi_l \|_{L^{2/(1+\beta)}}.
\end{displaymath}
Therefore,
\begin{displaymath}
\int_{B(0,R)} |Ef(x)|^2 H(x) dx 
\lct A_\alpha(H)^\beta \delta^{-\alpha \beta} 
\sum_{l=1}^\infty \| F \ast \psi_l \|_{L^{2/(1+\beta)}}^2.
\end{displaymath}

Since (\ref{condondeltalinear}) tells us $1/R \leq \delta/10$, it follows 
that $F$ is supported in a ball of radius 
$(\delta/2)+(\delta/10)= (3/5) \delta$, say $B(\xi_0,3\delta/5)$. Moreover, 
since $\psi_l$ is supported in $B(0,\delta)$, it follows by H\"{o}lder's 
inequality and Plancherel's theorem that
\begin{displaymath}
\| F \ast \psi_l \|_{L^{2/(1+\beta)}}^2 
\lct \delta^{n \beta} \| F \ast \psi_l \|_{L^2}^2
= \delta^{n \beta} \| \widehat{F} \widehat{\psi_l} \|_{L^2}^2.
\end{displaymath}
Thus
\begin{eqnarray*}
\int_{B(0,R)} |Ef(x)|^2 H(x) dx 
& \lct & A_\alpha(H)^\beta \delta^{-\alpha \beta} \delta^{n \beta}
         \sum_{l=1}^\infty 
         \int |\widehat{F}(\xi) \widehat{\psi_l}(\xi)|^2 d\xi \\
&  =   & A_\alpha(H)^\beta \delta^{(n-\alpha)\beta} 
         \int |\widehat{F}(\xi)|^2 
         \sum_{l=1}^\infty |\widehat{\psi_l}(\xi)|^2 d\xi \\
& \lct & A_\alpha(H)^\beta \delta^{(n-\alpha)(\beta-(2/n))} 
         \delta^{2-(2\alpha/n)} \| F \|_{L^2}^2.
\end{eqnarray*}
But we know from (\ref{normFG}) (whose proof shows that it is true in 
$\mbb R^n$ for all $n \geq 2$) that 
$\| F \|_{L^2} \lct \sqrt{R} \, \| f \|_{L^2(\sigma)}$, so
\begin{displaymath}
\int_{B(0,R)} |Ef(x)|^2 H(x) dx
\lct A_\alpha(H)^\beta \delta^{-2 \alpha/n} (\delta^2 R) 
     \delta^{(n-\alpha)(\beta-(2/n))} \| f \|_{L^2(\sigma)}^2.
\end{displaymath}

Writing 
\begin{displaymath}
|Ef(x)|^p= |Ef(x)|^{p-2} |Ef(x)|^2 \leq \| f \|_{L^1(\sigma)}^{p-2} |Ef(x)|^2
\end{displaymath}
and using (\ref{rightexplinear}), we now see that
\begin{eqnarray*}
\lefteqn{\int_{B(0,R)} |Ef(x)|^p H(x) dx} \\
& \lct & A_\alpha(H)^\beta \delta^{-2 \alpha/n} (\delta^2 R)
         \delta^{(n-\alpha)(\beta-(2/n))} \| f \|_{L^1(\sigma)}^{p-2} 
		 \| f \|_{L^2(\sigma)}^2 \\
& \lct & A_\alpha(H)^\beta \delta^{-2 \alpha/n} (\delta^2 R)
         \| f \|_{L^p(\sigma)}^p,
\end{eqnarray*}
which proves (\ref{baselinear}).

\subsection{The induction argument}

We let $0 < \epsilon < 10^{-2}$ and $R \geq 1$ be two numbers satisfying 
$R \geq (1000)^{1/(1-4\epsilon)}$. We also let $\delta$ be as in
(\ref{condondelta}). We're going to prove our estimate by inducting over 
$\delta$.

\underline{Base of the induction:} Here $\delta= R^{-1/2}$. Plugging this
value of $\delta$ into (\ref{baselinear}) in dimension $n=2$, we get
\begin{displaymath}
\int_{B(0,R)} |Ef(x)|^p H(x) dx 
\leq C_L A_\alpha(H)^\beta R^{\alpha /2} \| f \|_{L^p(\sigma)}^p.
\end{displaymath}

\underline{The inductive step:} Suppose $\delta$ satisfies the condition 
(\ref{condondelta}):
\begin{displaymath}
(10) R^\epsilon \leq \frac{1}{\delta} \leq \frac{R \delta}{10},
\end{displaymath}
and the estimate is true for $\delta$, i.e.
\begin{equation}
\label{indhyp}
\int_{B(0,R)} |Ef(x)|^p H(x) dx 
\leq C R^\epsilon A_\alpha(H)^\beta R^{\alpha /2} \| f \|_{L^p(\sigma)}^p.
\end{equation}
whenever $f \in L^1(\sigma)$, $f$ is supported on an arc 
$I_\delta \subset \mbb S^1$, and $\sigma(I_\delta) \leq \delta$. We are 
going to show that 
\begin{equation}
\label{indest}
\int_{B(0,R)} |Eg(x)|^p H(x) dx 
\leq C' R^\epsilon A_\alpha(H)^\beta R^{\alpha /2} \| g \|_{L^p(\sigma)}^p
\end{equation}
whenever $g \in L^1(\sigma)$, $f$ is supported on an arc 
$I_{R^\epsilon \delta} \subset \mbb S^1$, and 
$\sigma(I_{R^\epsilon \delta}) \leq R^\epsilon \delta$, where
\begin{displaymath}
C'= 3^p C + (10)^p R^{(p+2)\epsilon} C_B.
\end{displaymath}

We let $K = R^\epsilon$ and cover the support of $g$ by $K$ arcs $\tau$ each 
of measure $\delta$. We then write $g= \sum_{\tau} f_\tau$ with each 
function $f_\tau$ supported in the arc $\tau$. 

Following \cite{bg:bgmethod} and \cite{guth:poly}, for $x \in \mbb R^2$, we 
define the significant set of $x$ by
\begin{displaymath}
S(x)= \{ \tau : |Ef_\tau(x)| \geq \frac{1}{10 K} |Eg(x)| \}.
\end{displaymath}
Then
\begin{displaymath}
|Eg(x)| 
\leq \Big| \sum_{\tau \in S(x)} Ef_\tau(x) \Big| + \frac{1}{10} |Eg(x)|,
\end{displaymath}
so that
\begin{equation}
\label{boundoneg}
|Eg(x)| \leq \frac{10}{9} \Big| \sum_{\tau \in S(x)} Ef_\tau(x) \Big|.
\end{equation}

The narrow set ${\mathcal N}$ and the broad set ${\mathcal B}$ are now 
defined as
\begin{displaymath}
{\mathcal N} = B(0,R) \cap \{ x \in \mbb R^2 : \# S(x) \leq 2 \} 
\hspace{0.25in} \mbox{ and } \hspace{0.25in} 
{\mathcal B} = B(0,R) \setminus {\mathcal N}.
\end{displaymath}
We will estimate $\int_{\mathcal N} |Eg(x)|^p H(x) dx$ by induction and
$\int_{\mathcal B} |Eg(x)|^p H(x) dx$ by using the bilinear estimate.

By (\ref{indhyp}) and (\ref{boundoneg}),
\begin{eqnarray*}
\int_{\mathcal N} |Eg(x)|^p H(x) dx 
& \leq & 2^{p-1} \Big( \frac{10}{9} \Big)^p \int_N 
         \sum_{\tau \in S(x)} |Ef_\tau(x)|^p H(x) dx \\
& \leq & \Big( \frac{20}{9} \Big)^p \int_N \sum_\tau |Ef_\tau(x)|^p H(x) dx 
         \\
& \leq & 3^p \sum_\tau C R^\epsilon A_\alpha(H)^\beta R^{\alpha/2} 
         \| f_\tau \|_{L^p(\sigma)}^p \\
&   =  & 3^p C R^\epsilon A_\alpha(H)^\beta R^{\alpha/2} 
         \| g \|_{L^p(\sigma)}^p.
\end{eqnarray*}

To every $x \in {\mathcal B}$ there are two caps $\tau_x, \tau_x' \in S(x)$ 
so that $\mbox{Dist}(\tau_x,\tau_x') \geq \delta$. Writing 
\begin{displaymath}
|Eg(x)|^p= |Eg(x)|^{p/2} |Eg(x)|^{p/2} 
\leq (10 K |Ef_{\tau_x}(x)|)^{p/2} (10 K |Ef_{\tau_x'}(x)|)^{p/2},
\end{displaymath}
we see that
\begin{displaymath}
|Eg(x)|^p \leq (10 K)^p 
\sum_{\tau, \tau': \, \mbox{\tiny Dist}(\tau,\tau') \geq \delta}
|Ef_\tau(x)|^{p/2} |Ef_{\tau'}(x)|^{p/2}.
\end{displaymath}
Using the bilinear estimate (\ref{bilinearest}), it follows that
\begin{eqnarray*}
\lefteqn{\int_{\mathcal B} |Eg(x)|^p H(x) dx} \\
& \leq & (10 K)^p 
       \sum_{\tau, \tau': \, \mbox{\tiny Dist}(\tau,\tau') \geq \delta}
       \int_{\mathcal B} |Ef_\tau(x)|^{p/2} 
    	                 |Ef_{\tau'}(x)|^{p/2} H(x) dx \\
& \leq & (10 K)^p C_B R^\epsilon A_\alpha(H)^\beta R^{\alpha/2}
         \sum_{\tau, \tau': \, \mbox{\tiny Dist}(\tau,\tau') \geq \delta}
         \| f_\tau \|_{L^p(\sigma)}^{p/2} 
		 \| f_{\tau'} \|_{L^p(\sigma)}^{p/2} \\
& \leq & (10)^p K^p C_B R^\epsilon A_\alpha(H)^\beta R^{\alpha/2}
         \sum_{\tau, \tau': \, \mbox{\tiny Dist}(\tau,\tau') \geq \delta}
         \| g \|_{L^p(\sigma)}^{p/2} \| g \|_{L^p(\sigma)}^{p/2}.
\end{eqnarray*}
Therefore,
\begin{displaymath}
\int_{\mathcal B} |Eg(x)|^p H(x) dx
\leq (10)^p K^{p+2} C_B A_\alpha(H)^\beta R^{\alpha/2}
     \| g \|_{L^p(\sigma)}^p.	
\end{displaymath}

Combining the narrow and broad estimates, we arrive at (\ref{indest}).

\underline{The iteration:} In the inductive step we proved that if we were
given that the estimate
\begin{displaymath}
\int_{B(0,R)} |Ef(x)|^p H(x) dx 
\leq C R^\epsilon A_\alpha(H)^\beta R^{\alpha /2} \| f \|_{L^p(\sigma)}^p
\end{displaymath}
holds for every function $f \in L^1(\sigma)$ that is supported in an arc of 
$\sigma$-measure $\leq \delta$, and $\delta$ obeys (\ref{baselinear}), then 
we can produce the estimate
\begin{displaymath}
\int_{B(0,R)} |Eg(x)|^p H(x) dx 
\leq C' R^\epsilon A_\alpha(H)^\beta R^{\alpha /2} \| g \|_{L^p(\sigma)}^p
\end{displaymath}
for every function $g \in L^1(\sigma)$ that is supported in an arc of 
$\sigma$-measure $\leq R^\epsilon \delta$, where
\begin{displaymath}
C'= 3^p C + (10)^p R^{(p+2)\epsilon} C_B.
\end{displaymath}

Starting with the base of the induction, where $\delta = R^{-1/2}$ and 
$C=C_L$, and applying the inductive step $k$ times, we arrive at an estimate 
that holds for every function $f \in L^1(\sigma)$ that is supported on an 
arc of $\sigma$-measure 
$\leq \delta_k = R^{k \epsilon}\delta = R^{k \epsilon}/\sqrt{R}$, with 
constant
\begin{displaymath}
C_k= 3^{kp} C_L + (10)^p R^{(p+2)\epsilon} C_B \sum_{l=0}^{k-1} 3^{lp}
= 3^{kp} C_L + (10)^p R^{(p+2)\epsilon} C_B \frac{1-3^{kp}}{1-3^p}.
\end{displaymath}
At the step before the last, $k=(1/(2\epsilon)) -2$ and
$\delta_k= R^{[(1/(2\epsilon)) -2] \epsilon}/\sqrt{R}=R^{-2\epsilon}$, which 
is a valid value of $\delta$ (i.e.\ $\delta_k=R^{-2\epsilon}$ obeys 
(\ref{baselinear}), because  
$10 R^\epsilon \leq 1/R^{-2\epsilon} \leq R^{1-2\epsilon}/10$). Applying the 
inductive step one last time, we get the estimate
\begin{displaymath}
\int_{B(0,R)} |Ef(x)|^p H(x) dx 
\leq C R^\epsilon A_\alpha(H)^\beta R^{\alpha /2} \| f \|_{L^p(\sigma)}^p
\end{displaymath}
for every function $f \in L^1(\sigma)$ that is supported on an arc of 
$\sigma$-measure $\leq R^{-\epsilon}$, where the constant $C$ satisfies
\begin{displaymath}
C \leq 3^{p/(2\epsilon)} 
\Big( C_L + \frac{(10)^p R^{(p+2)\epsilon}}{3^p-1} C_B \Big).
\end{displaymath}

Since the circle $\mbb S^1$ can be covered by $\sim R^\epsilon$ such arcs,
(\ref{mainest}) follows and Theorem \ref{mainjj} is proved.

\end{document}